\tikzstyle arrowstyle=[scale=1]
\tikzstyle directed=[postaction={decorate,decoration={markings,mark=at position .65 with {\arrow[arrowstyle]{stealth}}}}]
\tikzstyle reverse directed=[postaction={decorate,decoration={markings,mark=at position .65 with {\arrowreversed[arrowstyle]{stealth};}}}]
\newtheorem{Def}{Definition}
\newtheorem{Th}{Theorem}[section]
\newtheorem{Lm}{Lemma}[section]
\newtheorem{remark}{Remark}[section]
\newtheorem{theorem}{Theorem}[section]
\newtheorem{lemma}[theorem]{Lemma}
\theoremstyle{definition}
\numberwithin{equation}{section}
\newcommand{\be}{\begin{eqnarray}}
\newcommand{\ee}{\end{eqnarray}}
\newcommand{\Om}{\Omega}
\newcommand{\dx}{\,dx}
\newcommand{\ds}{\,ds}
\newcommand{\Rmnum}[1]{\expandafter\@slowromancap\romannumeral #1@}
\newcommand{\PiCR}{\Pi_{\rm CR}}
\newcommand{\PiECR}{\Pi_{\rm ECR}}
\newcommand{\PiP}{\Pi_{\rm P_1}}
\newcommand{\VCR}{V_{\rm CR}}
\newcommand{\VECR}{V_{\rm ECR}}
\newcommand{\CRlam}{\lambda_{\rm CR} }
\newcommand{\ECRlam}{\lambda_{\rm ECR}}
\newcommand{\CRu}{u_{\rm CR} }
\newcommand{\ECRu}{u_{\rm ECR} }
\newcommand{\Mu}{u_{\rm M} }
\newcommand{\Mlam}{\lambda_{\rm M} }
\newcommand{\Poneu}{u_{\rm P_1} }
\newcommand{\Ponelam}{\lambda_{\rm P_1} }
\newcommand{\PoneAstu}{u_{\rm P_1^\ast} }
\newcommand{\PoneAstlam}{\lambda_{\rm P_1^\ast} }
\newcommand{\cE}{\mathcal{E}}
\newcommand{\cT}{\mathcal{T}}
\newcommand{\R}{\mathbb{R}}
\renewcommand{\arraystretch}{1.5}
\newcommand{\yemeifont}{\fontsize{9pt}{\baselineskip}\selectfont}
\begin{document}
\title{
Asymptotically exact a posteriori error  estimates of eigenvalues by the Crouzeix-Raviart element and enriched Crouzeix-Raviart element
}

\author {Jun Hu}
\address{LMAM and School of Mathematical Sciences, Peking University,
  Beijing 100871, P. R. China.  hujun@math.pku.edu.cn}

\author{Limin Ma}
\address{LMAM and School of Mathematical Sciences, Peking University,
  Beijing 100871, P. R. China. maliminpku@gmail.com}
\thanks{The authors were supported by  NSFC
projects 11625101, 91430213 and 11421101}
\maketitle

\begin{abstract}
Two asymptotically exact a posteriori error estimates are proposed for eigenvalues  by the nonconforming  Crouzeix--Raviart  and  enriched Crouzeix--Raviart  elements. The main challenge in the design of such error estimates comes from  the  nonconformity of the  finite element spaces used. Such nonconformity  causes two difficulties, the first one is the construction of  high accuracy gradient recovery algorithms, the second  one is  a computable high accuracy approximation of a consistency error term.  The first difficulty was solved  for both nonconforming elements in a previous paper. Two methods are proposed to solve the second difficulty in the present paper. In particular, this  allows the use of high accuracy gradient recovery techniques. Further,  a post-processing algorithm  is  designed by utilizing asymptotically exact a posteriori error estimates to construct  the weights of a combination of two approximate eigenvalues. This algorithm requires to solve only one eigenvalue problem and admits high accuracy eigenvalue approximations both theoretically and numerically.
 \vskip 15pt
 
\noindent{\bf Keywords. }{eigenvalue problems, nonconforming elements, asymptotically exact a posteriori error estimates}

 \vskip 15pt
 
\noindent{\bf AMS subject classifications.}
    { 65N30, 73C02.}
\end{abstract}

\section{Introduction}
Asymptotically exact a posteriori error estimates are  widely used to improve accuracy of approximations. A posteriori error estimates were first proposed by Babu\v{s}ka and Rheinboldt in 1978 \cite{babuvska1978posteriori}. Since then, some important branches of a posteriori error estimates have been developed, such as residual type a posteriori error estimates \cite{ainsworth2011posteriori, becker2010convergent,  carstensen2007unifying,carstensen2007framework} 
and recovery type a posteriori error estimates \cite{babuvska1994validation,carstensen2004all,qun1993notes,yan2001gradient,zhang2005new,zienkiewicz1992superconvergent}. For eigenvalues of the Laplacian operator by the conforming linear element, asymptotically exact a posteriori error estimates were proposed and analyzed in \cite{Zhang2006Enhancing}. These error estimates play an important role in improving the accuracy of eigenvalues to a remarkable fourth order.
For that conforming element, the error in the eigenvalues can be decomposed into  the energy norm  of the error in the approximation of the eigenfunctions and a higher order term. Asymptotically exact a posteriori error estimates follow directly from this crucial fact and an application of high accuracy gradient recovery techniques, such as  polynomial preserving recovery techniques (PPR for short hereinafter) in \cite{guo2015gradient,Zhang2005A}, Zienkiewicz-Zhu superconvergence patch recovery techniques in \cite{Zienkiewicz1992The} and that superconvergent cluster recovery methods in \cite{huang2010superconvergent}. 

As for nonconforming elements, the error for the eigenvalues is composed of the energy norm of the error  in the approximation of  the corresponding eigenfunctions, an extra consistency error term and a higher order term. The nonconformity causes two major difficulties, the first one is the construction of  high accuracy gradient recovery algorithms, the second  one is  a computable high accuracy approximation of the consistency error term. A previous paper \cite{hu2018optimal} analyzed an optimal superconvergence result for both the nonconforming Crouzeix-Raviart (CR for short hereinafter)  element and the enriched Crouzeix-Raviart (ECR for short hereinafter) element. It offers a computable high accuracy approximation to the gradient of the eigenfunctions. The aforementioned consistency error term requires to approximate the eigenfunctions themselves with high accuracy, not the gradient of the eigenfunctions any more. The fact that there exist no such high accuracy function recovery techniques in literature causes the second difficulty for nonconforming elements. 

Two types of asymptotically exact a posteriori error estimates for the eigenvalues are designed for the nonconforming CR element and the ECR element. The main idea here is to turn this function recovery problem into a high accuracy gradient recovery problem. The first type of a posteriori error estimates employs a commuting interpolation of eigenfunctions, and the second type makes use of a conforming interpolation of eigenfunctions. Both types of asymptotically exact a posteriori error estimates require a high accuracy gradient recovery technique for the nonconforming CR element and the ECR element. The first design of asymptotically exact a posteriori error estimates is much easier to implement but requires a commuting interpolation, while the other one applies for more general nonconforming elements as long as the corresponding discrete space contains a conforming subspace. Although both error estimates achieve the same predicted accuracy, experiments indicate even higher accuracy for the first type of error estimates when the eigenfunctions are smooth enough.  While the second one admits much better experimental performance when the eigenfunctions are singular.

An additional technique for high precision eigenvalues is to combine two approximate eigenvalues by a weighted-average \cite{Hu2012A}. 
The accuracy of the resulted combined eigenvalues depends on the accuracy of the weights.
In \cite{Hu2012A}, two finite elements are employed to solve eigenvalue problems on two meshes with one element producing upper bounds of the eigenvalues and the other one producing lower bounds. The main idea there is to design approximate weights through these four resulted discrete eigenvalues.
This algorithm is observed to be quite efficient by experiments.

By use of the aforementioned asymptotically exact a posteriori error estimates, a new post-processing algorithm is proposed and analyzed to improve the accuracy of the approximate eigenvalues. Given lower bounds of the eigenvalues and the corresponding nonconforming approximate eigenfunctions, an application of the average-projection method \cite{Hu2015Constructing} to these eigenfunctions yields conforming approximate eigenfunctions. By \cite{Hu2015Constructing}, Rayleigh quotients of such conforming eigenfunctions are asymptotic upper bounds of the eigenvalues. The new algorithm combines the lower bounds and the upper bounds of the eigenvalues by a weighted average. The weights here are designed from the corresponding asymptotically exact a posteriori error estimates. The resulted combined eigenvalues are proved to admit higher accuracy both theoretically and experimentally. It needs to point out that only one discrete eigenvalue problem needs to be solved in this new algorithm.

The remaining paper is organized as follows. Section 2 presents second order elliptic eigenvalue problems and some notations. Section 3 establishes and analyzes asymptotically exact a posteriori error estimates for eigenvalues by the nonconforming CR element and the ECR element. Section 4 proposes two post-processing algorithms to approximate eigenvalues with high accuracy. Section 5 presents some numerical tests.

\section{Notations and Preliminaries}
\subsection{Notations}
We first introduce some basic notations. Given a nonnegative integer $k$ and a bounded domain $\Om\subset \mathbb{R}^2$ with Lipchitz boundary $\partial \Om$, let $H^k(\Om,\mathbb{R})$, $\parallel \cdot \parallel_{k,\Om}$ and $|\cdot |_{k,\Om}$ denote the usual Sobolev spaces, norm, and semi-norm, respectively. Denote the standard $L^2(\Om,\mathbb{R})$ inner product and $L^2(K,\mathbb{R})$ inner product by $(\cdot, \cdot)$ and $(\cdot, \cdot)_{0,K}$, respectively. Let $H_0^1(\Om,\mathbb{R}) = \{u\in H^1(\Om,\mathbb{R}): u|_{\partial \Om}=0\}$.

Suppose that $\Om\subset \mathbb{R}^2$ is a bounded polygonal domain covered exactly by a shape-regular partition $\cT_h$ into simplices. Let element $K$ have vertices $\bold{p}_i=(p_{i1},p_{i2}),1\leq i\leq 3$ oriented counterclockwise, and corresponding barycentric coordinates $\{\phi_i\}_{i=1}^3$. Denote $\{e_i\}_{i=1}^3$ the edges of element $K$, and $\{\bold{t}_i\}_{i=1}^3$ the unit tangent vectors with counterclockwise orientation. Denote the column vectors $\bold{e}_1=(1, 0)^T$ and 
 $\bold{e}_2=(0, 1)^T$.

Let $|K|$ denote the volume of element $K$ and $|e|$ the length of edge $e$. Let $h_K$ denote the diameter of element $K\in \cT_h$ and $h=\max_{K\in\cT_h}h_K$. Denote the set of all interior edges and boundary edges of $\cT_h$ by $\cE_h^i$ and $\cE_h^b$, respectively, and $\cE_h=\cE_h^i\cup \cE_h^b$. For any interior edge $e=K_e^1\cap K_e^2$, we denote the element with larger global label by $K_e^1$, the one with smaller global label by $K_e^2$. Let $\{\cdot\}$ and $[\cdot]$ be the average and jump of piecewise functions over edge $e$, namely
$$
\{v\}|_e := \frac{1}{2}(v|_{K_e^1}+v|_{K_e^2}),\qquad [v]|_e := v|_{K_e^1}-v|_{K_e^2}
$$
for any piecewise function $v$. For $K\subset\R^2,\ r\in \mathbb{Z}^+$, let $P_r(K)$ be the space of all polynomials of degree not greater than $r$ on $K$. Denote the second order derivatives $\frac{\partial^2 u}{\partial x_i\partial x_j}$ by $\partial_{x_ix_j} u$, $1\leq i, j\leq 2$, the piecewise gradient operator and the piecewise Hessian operator by $\nabla_h$ and $\nabla_h^2$, respectively.

Throughout the paper, a positive constant independent of the mesh size is denoted by $C$, which refers to different values at different places. 
\subsection{Nonconforming elements for eigenvalue problems}
We consider a model eigenvalue problem of finding : $(\lambda, u)\in \mathbb{R}\times V$ such that $\parallel u \parallel_{0,\Om}=1$  and
\be\label{variance}
a(u, v)=\lambda(u, v) \text{\quad for any }v\in V,
\ee
where $V:= H^1_0(\Om,\mathbb{R})$. The bilinear form
$
a(w, v):=\int_{\Om} \nabla  w\cdot \nabla v \dx
$
is symmetric, bounded, and coercive, namely for any $ w, v\in V$,
$$
a(w,v)=a(v,w),\quad |a(w,v)|\leq C  \parallel w\parallel_{1,\Om}\parallel v\parallel_{1,\Om},\quad \parallel v\parallel_{1,\Om}^2\leq C  a(v,v).
$$
The eigenvalue problem \eqref{variance} has a sequence of eigenvalues
$$0<\lambda_1\leq \lambda_2\leq \lambda_3\leq ...\nearrow +\infty,$$
and the corresponding eigenfunctions
$u_1, u_2, u_3,... ,$
with
$$(u_i, u_j)=\delta_{ij}\ \text{ with } \delta_{ij}=\begin{cases}
0 &\quad i\neq j\\
1 &\quad i=j
\end{cases}.$$

Let $V_h$ be a nonconforming finite element approximation of $V$ over $\cT_h$. The corresponding finite element approximation of \eqref{variance} is to find $(\lambda_h, u_h)\in \mathbb{R}\times V_h$ such that $\parallel u_h\parallel_{0,\Om}=1$ and
\be\label{discrete}
a_h(u_h,v_h)=\lambda_h(u_h, v_h)\quad \text{ for any }v_h\in V_h,
\ee
with the discrete bilinear form $a_h(w_h,v_h)$ defined elementwise as
$$
a_h(w_h,v_h):=\sum_{K\in\cT_h}\int_K \nabla_h w_h\cdot \nabla_h v_h\dx.
$$
Let $N=\text{dim }V_h$. Suppose that $ \parallel \cdot \parallel_h:=a_h(\cdot, \cdot)^{1/2}$ is a norm over the discrete space $V_h$, the discrete problem \eqref{discrete} admits a sequence of discrete eigenvalues
$$0<\lambda_{1,h}\leq \lambda_{2,h}\leq \lambda_{3,h}\leq ...\leq \lambda_{N,h},$$
and the corresponding eigenfunctions
$
u_{1,h}, u_{2,h},..., u_{N,h}
$
with $(u_{i,h}, u_{j,h})=\delta_{ij}.$

We consider the following two nonconforming elements: the CR element and the ECR element.

$\bullet$\quad  The CR element space over $\cT_h$ is defined in \cite{Crouzeix1973Conforming} by
\begin{equation*}
\begin{split}
\VCR:=&\big \{v\in L^2(\Om,\mathbb{R})\big|v|_K\in P_1(K)\text{ for any }  K\in\cT_h, \int_e [v]\ds =0\text{ for any }  e\in \cE_h^i,\\
&\int_e v\ds=0\text{ for any }  e\in \cE_h^b\big\}.
\end{split}
\end{equation*}
Moreover, we define the canonical interpolation operator $\PiCR: H^1_0(\Om,\mathbb{R})\rightarrow \VCR$ as follows:
\be\label{crinterpolation}
\int_e\PiCR v\ds=\int_e v\ds\quad \text{ for any } e\in \cE_h,\ v\in H^1_0(\Om,\mathbb{R}).
\ee
Denote the approximate eigenpair of \eqref{discrete} in the nonconforming space $\VCR$ by \\
$(\CRlam, \CRu)$ with $ \parallel \CRu\parallel_{0,\Om}=1$.

$\bullet$\quad  The ECR element space over $\cT_h$ is defined in \cite{Hu2014Lower} by
\begin{equation*}
\begin{split}
\VECR:=&\big \{v\in L^2(\Om,\mathbb{R})\big|v|_K\in \text{ECR}(K)\text{ for any }  K\in\cT_h, \int_e [v]\ds =0\text{ for any }  e\in \cE_h^i,\\
&\int_e v\ds=0\text{ for any }  e\in \cE_h^b\big\}.
\end{split}
\end{equation*}
with $\text{ECR}(K)=P_1(K)+\text{span}\big\{x_1^2+x_2^2\big\}$. Define the canonical interpolation operator $\PiECR: H^1_0(\Om,\mathbb{R})\rightarrow \VECR $ by
\begin{equation}\label{ecrinterpolation}
\int_e\PiECR v\ds= \int_e v\ds,\quad
\int_K \PiECR v\dx= \int_K v\dx\quad\forall e\in\cE_h,  K\in\cT_h.
\end{equation}
Denote the approximate eigenpair of \eqref{discrete} in the nonconforming space $\VECR$ by $(\ECRlam,\ECRu)$ with $ \parallel \ECRu\parallel_{0,\Om}=1$.

It follows from the theory of nonconforming approximations of eigenvalue problems in \cite{Hu2014Lower,rannacher1979nonconforming} that
\begin{equation}\label{CR:est}
|\lambda-\CRlam|+\parallel u- \CRu\parallel_{0,\Om} + h^s\parallel \nabla_h (u-\CRu)\parallel_{0,\Om}\leq C  h^{2s}\parallel u\parallel_{1+s,\Om},
\end{equation}
\begin{equation}\label{ECR:est}
|\lambda-\ECRlam|+\parallel u - \ECRu\parallel_{0,\Om} + h^s\parallel \nabla_h (u-\ECRu)\parallel_{0,\Om}\leq C  h^{2s}\parallel u\parallel_{1+s,\Om},
\end{equation}
provided that $u\in H^{1+s}(\Om,\mathbb{R})\cap H^1_0(\Om,\mathbb{R})$, $\ 0<s\leq 1$.

For the CR element and the ECR element, there holds the following commuting property of the canonical interpolations
\begin{equation}\label{commuting}
\begin{split}
\int_K \nabla(w - \PiCR w)\cdot \nabla v_h \dx &=0\quad\text{ for any } w\in V, v_h\in \VCR,\\
\int_K \nabla(w - \PiECR w)\cdot \nabla v_h \dx &=0\quad\text{ for any }w\in V, v_h\in \VECR,
\end{split}
\end{equation}
see \cite{Crouzeix1973Conforming,Hu2014Lower} for details. 

\subsection{Some Taylor expansions}
On each element $K$, denote the centroid of element $K$ by $\bold{M}_K=(M_1,M_2)$. Let 
$
A_K=\sum_{i,j =1, i\neq j}^3 \big ((p_{i1}-p_{j1})^2 - (p_{i2}-p_{j2})^2\big ),
$
$
B_K= \sum_{i=1}^3\big( 2p_{i1}p_{i2} -\sum_{j=1,j\neq i}^3 p_{i1}p_{j2}\big  ),
$ 
and $H_K=\sum_{i=1}^3 |e_i|^2 $. We introduce three short-hand notations
\begin{align*}\label{consDef}
\phi_{\rm ECR}^1(\bold{x})&=(x_1-M_1)^2-(x_2-M_2)^2,
&\phi_{\rm ECR}^2(\bold{x})&=(x_1-M_1)(x_2-M_2),\\
\phi_{\rm ECR}^3(\bold{x})&=2 - \frac{36}{H_K}\sum_{i=1}^2(x_i-M_i)^2.&
\end{align*}
Note that 
$
P_2(K) = P_1(K) + {\rm span}\{\phi_{\rm ECR}^1, \phi_{\rm ECR}^2, \phi_{\rm ECR}^3\}.
$ 

For any element $K$ and $H_h\in L^2(K, \mathbb{R}^{2\times 2})$, define 
{\small
\begin{equation}\label{Pdefine}
\resizebox{0.9\textwidth}{!}{ $
\begin{split}
P_{\rm P_1}^K(H_h)=&-\frac{1}{2}\sum_{i=1}^3 |e_i|^2\phi_{i+1}\phi_{i-1}\frac{\|\bold{t}_i^TH_h\bold{t}_i\|_{0,K}}{|K|^{\frac{1}{2}}},\\
P_{\rm CR}^K(H_h)=&\frac{ \|\bold{e}_1^TH_h\bold{e}_1 - \bold{e}_2^TH_h\bold{e}_2\|_{0, K}}{4|K|^{\frac{1}{2}}}(I-\PiECR)\phi_{\rm ECR}^1 + \frac{  \|\bold{e}_1^TH_h\bold{e}_2\|_{0, K}}{|K|^{\frac{1}{2}}} (I-\PiECR)\phi_{\rm ECR}^2\\
&-\big(
\frac{  A_K+H_K }{144|K|^{\frac{1}{2}}} \| \bold{e}_1^TH_h\bold{e}_1\|_{0, K} + \frac{ H_K-A_K }{144|K|^{\frac{1}{2}}} \|\bold{e}_2^TH_h\bold{e}_2\|_{0, K}
\\
& + \frac{ B_K}{36|K|^{\frac{1}{2}}}\|\bold{e}_1^TH_h\bold{e}_2\|_{0, K}
\big)  \phi_{\rm ECR}^3\\
P_{\rm ECR}^K(H_h)=&\frac{ \|\bold{e}_1^TH_h\bold{e}_1-\bold{e}_2^TH_h\bold{e}_2\|_{0, K}}{4|K|^{\frac{1}{2}}}(I-\PiECR)\phi_{\rm ECR}^1 +   \frac{\|\bold{e}_1^TH_h\bold{e}_2\|_{0, K}}{|K|^{\frac{1}{2}}} (I-\PiECR)\phi_{\rm ECR}^2
\end{split}
$}
\end{equation}
}
where $\phi_i$ is the corresponding barycentric coordinate to vertex $\bold{p}_i$. For any $H_h\in L^2(K, \mathbb{R}^{2\times 2})$, both $\frac{\|\bold{t}_i^TH_h\bold{t}_j\|_{0,K}}{|K|^{\frac{1}{2}}}$ and $\frac{ \| \bold{e}_i^TH_h\bold{e}_j\|_{0, K}}{|K|^{\frac{1}{2}}}$ are constant. Thus, all these  functions $P_{\rm P_1}^K(H_h)$, $P_{\rm CR}^K(H_h)$ and $P_{\rm ECR}^K(H_h)$ are polynomials of order two. Note that if $H_h\in P_0(K, \mathbb{R}^{2\times 2})$, as used in numerical computations,
$$
\frac{\|\bold{t}_i^TH_h\bold{t}_j\|_{0,K}}{|K|^{\frac{1}{2}}}=\bold{t}_i^T|H_h|\bold{t}_j,\quad \frac{ \| \bold{e}_i^TH_h\bold{e}_j\|_{0, K}}{|K|^{\frac{1}{2}}}=\bold{e}_i^T|H_h|\bold{e}_j.
$$

The following lemma lists the Taylor expansion of the canonical interpolation error for the conforming linear element, the CR element and the ECR element, respectively. See \cite{hu2019asymptotic,huang2008superconvergence} for more details.

\begin{Lm}\label{Lm:cr}
For any quadratic function $w\in P_2(K)$,
\begin{equation}\label{identity:cr}
\begin{array}{rl}
(I-\PiP)w=&P_{\rm P_1}^K(\nabla^2 w),\\
(I-\PiCR)w=&P_{\rm CR}^K(\nabla^2 w),\\
(I-\PiECR)w=&P_{\rm ECR}^K(\nabla^2 w).
\end{array}
\end{equation}
\end{Lm}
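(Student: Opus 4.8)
The plan is to exploit linearity together with the fact that all three canonical interpolations reproduce $P_1(K)$. Fix an element $K$: $\PiP$ matches vertex values, while $\PiCR$ and $\PiECR$ match edge averages (and, for $\PiECR$, also the mean over $K$), so $I-\PiP$, $I-\PiCR$ and $I-\PiECR$ each annihilate $P_1(K)$. The map $H\mapsto P_\bullet^K(H)$ is likewise linear on constant matrices once, for a polynomial argument $H=\nabla^2 w$, the coefficients $\|\bold t_i^TH\bold t_i\|_{0,K}/|K|^{1/2}$ and $\|\bold e_i^TH\bold e_j\|_{0,K}/|K|^{1/2}$ in \eqref{Pdefine} are read as the sign-carrying constants $\bold t_i^T\nabla^2 w\,\bold t_i$ and $\bold e_i^T\nabla^2 w\,\bold e_j$ (this is the only reading under which the asserted equalities can hold for \emph{every} quadratic $w$, their left-hand sides being linear in $w$). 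Hence both sides of each identity descend to linear maps on the $3$-dimensional quotient $P_2(K)/P_1(K)$, and by the decomposition $P_2(K)=P_1(K)+\mathrm{span}\{\phi_{\rm ECR}^1,\phi_{\rm ECR}^2,\phi_{\rm ECR}^3\}$ recorded just before the lemma it suffices to check each identity for $w\in\{\phi_{\rm ECR}^1,\phi_{\rm ECR}^2,\phi_{\rm ECR}^3\}$, whose constant Hessians have entries $(\partial_{x_1x_1},\partial_{x_1x_2},\partial_{x_2x_2})$ equal to $(2,0,-2)$, $(0,1,0)$ and $(-72/H_K,\,0,\,-72/H_K)$.

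\emph{The ECR identity.} Modulo an affine function $\phi_{\rm ECR}^3$ equals $2-\tfrac{36}{H_K}(x_1^2+x_2^2)$, and $x_1^2+x_2^2\in\mathrm{ECR}(K)$, so $\PiECR$ reproduces $\phi_{\rm ECR}^3$ and $(I-\PiECR)\phi_{\rm ECR}^3=0$; plugging its Hessian into $P_{\rm ECR}^K$ also gives $0$ since both scalar coefficients vanish. For $w=\phi_{\rm ECR}^1$ and $w=\phi_{\rm ECR}^2$ the identity drops out of the definition of $P_{\rm ECR}^K$ once one checks that the coefficient of $(I-\PiECR)\phi_{\rm ECR}^1$, respectively $(I-\PiECR)\phi_{\rm ECR}^2$, equals $1$ on the relevant Hessian. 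Writing the quadratic part of a general $w$ as $\tfrac14(\partial_{x_1x_1}w-\partial_{x_2x_2}w)\phi_{\rm ECR}^1+(\partial_{x_1x_2}w)\phi_{\rm ECR}^2-\tfrac{H_K}{144}(\partial_{x_1x_1}w+\partial_{x_2x_2}w)\phi_{\rm ECR}^3$ modulo $P_1(K)$ and appealing to linearity then gives the stated form.

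\emph{The $P_1$ and CR identities.} For $\PiP$, expand $w\in P_2(K)$ in the quadratic nodal basis $\{\phi_k(2\phi_k-1)\}_k\cup\{4\phi_{k+1}\phi_{k-1}\}_k$ (the latter the edge-midpoint shape functions, $\bold m_k$ the midpoint of $e_k$) and drop the vertex part: $(I-\PiP)w=2\sum_k\big(2w(\bold m_k)-w(\bold p_{k+1})-w(\bold p_{k-1})\big)\phi_{k+1}\phi_{k-1}$. Since $w\in P_2(K)$, a Taylor expansion at $\bold m_k$ gives $2w(\bold m_k)-w(\bold p_{k+1})-w(\bold p_{k-1})=-\tfrac14|e_k|^2\,\bold t_k^T\nabla^2 w\,\bold t_k$, which turns this into $P_{\rm P_1}^K(\nabla^2 w)$ verbatim. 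For $\PiCR$, write $\PiCR=\PiP+(\PiCR-\PiP)$; the difference is the $P_1$ function whose average over $e_k$ is $\tfrac13\big(2w(\bold m_k)-w(\bold p_{k+1})-w(\bold p_{k-1})\big)$ (Simpson's rule for $w$ on $e_k$ minus the same average of the affine $\PiP w$), and the $P_1$ function with prescribed edge averages $\nu_k$ over $e_k$ is $\sum_k\nu_k(1-2\phi_k)$, so $(I-\PiCR)w$ is known explicitly. Re-expanding that quadratic in the basis $\{(I-\PiECR)\phi_{\rm ECR}^1,\,(I-\PiECR)\phi_{\rm ECR}^2,\,\phi_{\rm ECR}^3\}$ — legitimate because $\PiCR$ and $\PiECR$ share all edge dofs while $(I-\PiCR)(x_1^2+x_2^2)=-\tfrac{H_K}{36}\phi_{\rm ECR}^3$, the last identity following from $|\bold m_k-\bold M_K|^2+\tfrac{|e_k|^2}{12}=\tfrac{H_K}{18}$ for the centroid $\bold M_K$ — reproduces the coefficients of $P_{\rm CR}^K$, with $A_K$ and $B_K$ entering as $H_K$ times the $(x_1^2+x_2^2)$-components of $\PiECR\phi_{\rm ECR}^1$ and $\PiECR\phi_{\rm ECR}^2$.

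\emph{Main obstacle.} The conceptual reductions are short; the real work, and the step most prone to slips, is the explicit barycentric computation behind the CR identity — evaluating $\PiECR\phi_{\rm ECR}^1$, $\PiECR\phi_{\rm ECR}^2$ and the edge averages above, checking the centroid identity, and matching the resulting quadratic against the $A_K,B_K,H_K$-weighted combination in $P_{\rm CR}^K$ — together with the care needed to read the norm notation in \eqref{Pdefine} as signed Hessian contractions for polynomial arguments.
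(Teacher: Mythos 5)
Your argument is sound, and it supplies something the paper does not: the paper states Lemma~\ref{Lm:cr} with no proof, deferring entirely to the references it cites, so your derivation is a self-contained proof rather than a variant of one. The two structural moves you make are exactly right and are what reduce the claim to a finite check: (i) reading the nonnegative coefficients $\|\bold{t}_i^T H_h\bold{t}_i\|_{0,K}/|K|^{1/2}$ as the signed constants $\bold{t}_i^T\nabla^2w\,\bold{t}_i$ (forced, since replacing $w$ by $-w$ flips the left-hand sides of \eqref{identity:cr} but not a norm), and (ii) quotienting by $P_1(K)$, which all three interpolations reproduce, so that only $\phi_{\rm ECR}^1,\phi_{\rm ECR}^2,\phi_{\rm ECR}^3$ need be tested. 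Your supporting identities all check out: the average of $\sum_i(x_i-M_i)^2$ over each edge equals $H_K/18$, hence $\PiCR\phi_{\rm ECR}^3=0$ and $(I-\PiCR)(x_1^2+x_2^2)=-\tfrac{H_K}{36}\phi_{\rm ECR}^3$; the quadratic Lagrange expansion gives $(I-\PiP)w=2\sum_k\big(2w(\bold{m}_k)-w(\bold{p}_{k+1})-w(\bold{p}_{k-1})\big)\phi_{k+1}\phi_{k-1}$ and the Taylor step converts this verbatim into $P_{\rm P_1}^K(\nabla^2w)$; and since $\PiCR w$ and $\PiECR w$ share all edge averages, their difference is $c\,\phi_{\rm ECR}^3$ with $c$ the element average of $w-\PiCR w$.

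One concrete warning about the computation you deferred. Carrying it out for $w=\phi_{\rm ECR}^1$, using that $\frac{1}{|e_j|}\int_{e_j}(x-\bold{M}_K)\otimes(x-\bold{M}_K)\,ds=\frac16\sum_i(\bold{p}_i-\bold{M}_K)\otimes(\bold{p}_i-\bold{M}_K)$ independently of $j$ while the element average of the same tensor is $\frac1{12}\sum_i(\bold{p}_i-\bold{M}_K)\otimes(\bold{p}_i-\bold{M}_K)$, one finds $\PiECR\phi_{\rm ECR}^1-\PiCR\phi_{\rm ECR}^1=c\,\phi_{\rm ECR}^3$ with $c=-\frac{1}{72}\sum_{i\neq j}\big((p_{i1}-p_{j1})^2-(p_{i2}-p_{j2})^2\big)$, whereas matching $P_{\rm CR}^K(\nabla^2\phi_{\rm ECR}^1)$ requires $c=-A_K/36$. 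So the CR identity holds only if $A_K$ is read as the sum over unordered pairs $i<j$, i.e.\ half of the literal $\sum_{i\neq j}$ in the paper's definition; the analogous check for $\phi_{\rm ECR}^2$ returns $-B_K/36$ with $B_K$ exactly as defined, so the discrepancy is isolated to $A_K$. Your own prescription --- $A_K$ equal to $H_K$ times the $(x_1^2+x_2^2)$-component of $\PiECR\phi_{\rm ECR}^1$ --- produces the corrected constant, so your proof is right as designed; just be aware that the assertion that the expansion ``reproduces the coefficients of $P_{\rm CR}^K$'' is true only after this factor-of-two repair of the stated $A_K$.
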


\subsection{Superconvergence results for the CR element}
Before designing a posteriori error estimates, we represent 
the post-processing mechanism which was first proposed in \cite{brandts1994superconvergence} and then analyzed in \cite{Hu2016Superconvergence} for the CR element. The shape function space of the Raviart-Thomas element \cite{Raviart1977A} reads as follows
$$
\text{RT}_K:=(P_0(K))^2+ \bold{x}P_0(K)\quad\text{ for any }K\in \cT_h,
$$
and the corresponding finite element space is
$$
\text{RT}(\cT_h):=\big \{\tau\in H(\text{div},\Om,\mathbb{R}^2): \tau|_K\in \text{RT}_K\text{ for any }K\in \cT_h\big \}.
$$
Given $\textbf{q}\in \rm RT(\cT_h)$, define function $K_h \textbf{q}|_K\in P_1(K)\times P_1(K)$ as in \cite{brandts1994superconvergence,Hu2016Superconvergence}.
\begin{Def}\label{Def:R}
1.For each interior edge $e\in\cE_h^i$, the elements $K_e^1$ and $K_e^2$ are the pair of elements sharing $e$. Then the value of $K_h \textbf{q}$ at the midpoint $\textbf{m}_e$ of $e$ is
$$
K_h \textbf{q}(\textbf{m}_e)={1\over 2}(\textbf{q}|_{K_e^1}(\textbf{m}_e)+\textbf{q}|_{K_e^2}(\textbf{m}_e)).
$$

2.For each boundary edge $e\in\cE_h^b$, let $K$ be the element having $e$ as an edge, and $K'$ be an element sharing an edge $e'\in\cE_h^i$ with $K$. Let $e''$ denote the edge of $K'$ that does not intersect with $e$, and $\textbf{m}$, $\textbf{m}'$ and $\textbf{m}''$ be the midpoints of the edges $e$, $e'$ and $e''$, respectively. Then the value of $K_h \textbf{q}$ at the point $\textbf{m}$ is
$$
K_h \textbf{q}(\textbf{m})=2K_h \textbf{q}(\textbf{m}') - K_h \textbf{q}(\textbf{m}'').
$$
\begin{center}
\begin{tikzpicture}[xscale=2,yscale=2]
\draw[-] (-0.5,0) -- (2,0);
\draw[-] (0,0) -- (0.5,1);
\draw[-] (0.5,1) -- (2,1);
\draw[-] (0.5,1) -- (1.5,0);
\draw[-] (2,1) -- (1.5,0);
\node[below, right] at (1,0.5) {\textbf{m}'};
\node[above] at (1.25,1) {\textbf{m}''};
\node[below] at (0.75,0) {\textbf{m}};
\node at (0.7,0.4) {K};
\node at (1.4,0.75) {K'};
\node at (1,0) {e};
\node at (1.4,0.2) {e'};
\node at (1.7,1) {e''};
\node at (0.3,-0.1) {$\partial \Om $};
\fill(1,0.5) circle(0.5pt);
\fill(1.25,1) circle(0.5pt);
\fill(0.75,0) circle(0.5pt);
\end{tikzpicture}
\end{center}
\end{Def}


The CR element solution for source problems admits a one order superconvergence on uniform triangulations \cite{hu2018optimal}. According to \cite{hu2019asymptotic}, the eigenfunction $ \CRu$ superconverges to the CR element solution for the corresponding source problem.
These two facts lead to the following superconvergence result
\begin{equation}\label{CRsuper}
\parallel \nabla u - K_h\nabla_h \CRu \parallel_{0,\Om}\leq C  h^2|\ln h|^{1/2}|u |_{\frac{7}{2},\Om},
\end{equation}
provided that $u\in H^{\frac{7}{2}}(\Om,\mathbb{R})\cap H^1_0(\Om,\mathbb{R})$. A similar same order superconvergence result of the CR element on a mildly structured mesh with a somehow higher regularity assumption was analyzed in \cite{li2018global} for source problems, which can be extended to eigenvalue problems. This superconvergence property \eqref{CRsuper} leads to the following lemma for second order derivatives of eigenfunctions.

\begin{Lm}\label{highorderestimate}
Let $(\lambda , u )$ be an eigenpair of \eqref{variance} with $u \in H^{\frac{7}{2}}(\Om,\mathbb{R})\cap H^1_0(\Om,\mathbb{R})$, and $(\CRlam,\CRu)$ be the corresponding approximate eigenpair of \eqref{discrete} in $\VCR$. It holds that
\begin{equation*}
\parallel \nabla^{2} u -\nabla_h K_h\nabla_h \CRu \parallel_{0,\Om}\leq C  h|\ln h|^{1/2}|u |_{\frac{7}{2},\Om},
\end{equation*}
provided that $u\in H^{\frac{7}{2}}(\Om,\mathbb{R})\cap H^1_0(\Om,\mathbb{R})$.
\end{Lm}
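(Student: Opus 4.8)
The plan is to derive the result directly from the superconvergence estimate \eqref{CRsuper}. That estimate controls $\nabla u-K_h\nabla_h\CRu$ in the $L^2$ norm, whereas the quantity to be bounded is the \emph{piecewise gradient} of the same recovered field; the bridge between the two is an inverse inequality, which is legitimate because $K_h\nabla_h\CRu$ is, by its very construction in Definition \ref{Def:R}, a piecewise linear vector field. To make the inverse inequality useful I would first subtract a genuinely smooth (here, continuous piecewise linear) approximation $w_h$ of $\nabla u$ whose piecewise Hessian is controlled by classical interpolation theory, so that the inverse inequality is applied only to the small remainder $w_h-K_h\nabla_h\CRu$.

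Concretely: since $u\in H^{7/2}(\Om,\mathbb{R})$ gives $\nabla u\in H^{5/2}(\Om,\mathbb{R}^2)\hookrightarrow C^0$, the componentwise nodal interpolant $w_h$ of $\nabla u$ onto continuous piecewise linear vector fields over $\cT_h$ is well defined, and classical interpolation theory gives $\|\nabla u-w_h\|_{0,\Om}\le Ch^2|u|_{7/2,\Om}$ together with $\|\nabla^2 u-\nabla_h w_h\|_{0,\Om}\le Ch|u|_{7/2,\Om}$. Writing
\begin{equation*}
\nabla^2 u-\nabla_h K_h\nabla_h\CRu=\big(\nabla^2 u-\nabla_h w_h\big)+\nabla_h\big(w_h-K_h\nabla_h\CRu\big),
\end{equation*}
the first summand is $O(h)\,|u|_{7/2,\Om}$ by the interpolation bound. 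For the second, an elementwise inverse inequality summed over $\cT_h$ gives $\|\nabla_h(w_h-K_h\nabla_h\CRu)\|_{0,\Om}\le Ch^{-1}\|w_h-K_h\nabla_h\CRu\|_{0,\Om}$, and then a triangle inequality together with $\|\nabla u-w_h\|_{0,\Om}\le Ch^2|u|_{7/2,\Om}$ and \eqref{CRsuper} yields $\|w_h-K_h\nabla_h\CRu\|_{0,\Om}\le Ch^2|\ln h|^{1/2}|u|_{7/2,\Om}$. Multiplying by $h^{-1}$ and adding the two contributions gives the claimed bound $Ch|\ln h|^{1/2}|u|_{7/2,\Om}$; note that the logarithmic factor enters only through \eqref{CRsuper}, while the power $h^{-1}$ lost in the inverse inequality is exactly balanced by the extra power of $h$ in that superconvergence estimate.

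The argument is largely routine, and I expect the only delicate point to be the inverse inequality step. Two things need checking there: that $K_h\nabla_h\CRu$ is a piecewise polynomial of fixed degree, so that the local inverse constants are uniform under shape-regularity — immediate from Definition \ref{Def:R}; and that one may use a single global factor $h^{-1}$ rather than $(\min_{K}h_K)^{-1}$, which is harmless in the setting where \eqref{CRsuper} is available, namely uniform (hence quasi-uniform) triangulations. A minor point is that the interpolation estimates naturally involve $|u|_{3,\Om}$ (or $|u|_{5/2,\Om}$), which is bounded by $\|u\|_{7/2,\Om}$; as in \eqref{CRsuper} we simply write $|u|_{7/2,\Om}$ on the right-hand side.
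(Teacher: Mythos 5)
Your proof is correct and follows essentially the same route as the paper: insert an intermediate piecewise-polynomial approximation of $\nabla u$, bound the smooth part by interpolation theory, and handle the discrete difference with an elementwise inverse inequality combined with \eqref{CRsuper}, so that the $h^{-1}$ loss is absorbed by the $O(h^2|\ln h|^{1/2})$ superconvergence. The only (immaterial) difference is your choice of intermediate object — the nodal $P_1$ interpolant of $\nabla u$ rather than the paper's $\nabla_h \Pi_{\rm P_2} u$, the piecewise gradient of the quadratic Lagrange interpolant of $u$.
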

\begin{proof}
Let $ \Pi_{\rm P_2} $ be the second order Lagrange interpolation, namely, the interpolation $ \Pi_{\rm P_2} u$ is a piecewise quadratic function over $\cT_h$ and admits the same value as $u $ at the vertices of each element and the midpoint of each edge. It follows from the theory in \cite{ShiWangBook} that
\begin{equation}\label{polationerr}
\big| u -\Pi_{\rm P_2} u \big|_{i,\Om}\leq C  h^{3-i}|u |_{3,\Om},\ 0\leq i\leq 2.
\end{equation}
Due to the triangle inequality,
\begin{equation}\label{uRHu2014Lowerhtotal}
\resizebox{0.9\textwidth}{!}{ $
\parallel \nabla^2 u -\nabla_h K_h\nabla_h \CRu \parallel_{0,\Om}\leq \parallel \nabla^2 u -\nabla^2_h \Pi_{\rm P_2} u \parallel_{0,\Om} + \parallel \nabla^2_h \Pi_{\rm P_2} u -\nabla_h K_h\nabla_h \CRu \parallel_{0,\Om}.
$}
\end{equation}
By the inverse inequality,
\begin{equation}\label{inverseineq}
\parallel \nabla^{2}_h \Pi_{\rm P_2} u -\nabla_h K_h\nabla_h \CRu \parallel_{0,\Om}\leq C  h^{-1}\parallel \nabla_h \Pi_{\rm P_2} u -K_h\nabla_h \CRu  \parallel_{0,\Om}.
\end{equation}
A combination of \eqref{CRsuper}, \eqref{polationerr} and  \eqref{inverseineq} yields
\begin{equation}\label{Rhinterpolation}
\begin{split}
&\parallel\nabla^{2}_h\Pi_{\rm P_2} u -\nabla_h K_h\nabla_h \CRu \parallel_{0,\Om}\\
\leq &C  h^{-1}| \Pi_{\rm P_2} u - u |_{1,\Omega}+h^{-1}\parallel \nabla u -K_h\nabla_h \CRu  \parallel_{0,\Om}\ 
\leq  C  h|\ln h|^{1/2}|u |_{\frac{7}{2},\Om}.
\end{split}
\end{equation}
A substitution of \eqref{polationerr} and \eqref{Rhinterpolation} into \eqref{uRHu2014Lowerhtotal} concludes
\begin{equation*}
\parallel \nabla^{2} u -\nabla_h K_h\nabla_h \CRu \parallel_{0,\Om}\leq C  h|\ln h|^{1/2}|u |_{\frac{7}{2},\Om}
\end{equation*}
which completes the proof.
\qed
\end{proof}

\section{Asymptotically exact a posteriori error estimates}\label{sec:errorestimate}\label{sec:posteriori}
In this section, asymptotically exact a posteriori error estimates for eigenvalues are designed and analyzed for the CR element and the ECR element.

For eigenvalues of the Laplacian operator solved by the conforming linear element, asymptotically exact a posteriori error estimates in \cite{Zhang2006Enhancing}  are based on a simple identity
$$
\lambda_h -\lambda =|u -u_h|_{1,\Om}^2-\lambda  \parallel u -u_h \parallel_{0,\Om}^2,
$$
where $(\lambda_h, u_h)$ is an approximate eigenpair by this conforming element. The $L^2$ norm of the error in the approximation of eigenfunctions is of higher order compared to their energy norm.
By approximating the first term $|u -u_h|_{1,\Om}^2$ with high accuracy gradient recovery techniques \cite{huang2010superconvergent,Zhang2005A,Zienkiewicz1992The}, asymptotically exact a posteriori error estimates for eigenvalues are resulted following the above identity.

For nonconforming elements of second order elliptic eigenvalue problems, there exists  a similar identity to the one  in \cite{yang2000posteriori}. 
\begin{lemma}
Let $(\lambda, u)$ be an eigenpair of \eqref{variance} and $(\lambda_h, u_h)$ the corresponding approximate eigenpair of \eqref{discrete}. There holds
\begin{equation}\label{originalId}
\lambda -\lambda_h =|u - u_h |_{1, h}^2+2 (a_h(u ,u_h )-\lambda_h (u , u_h ))- \lambda_h \|u - u_h\|_{0, \Om}^2.
\end{equation}
\end{lemma}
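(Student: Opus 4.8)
The plan is to verify the identity \eqref{originalId} by a direct algebraic manipulation, expanding the energy seminorm of the error and using the two eigenvalue equations \eqref{variance} and \eqref{discrete} together with the normalization $\|u\|_{0,\Om}=\|u_h\|_{0,\Om}=1$. First I would write out
\[
|u-u_h|_{1,h}^2 = a_h(u-u_h,\,u-u_h) = a_h(u,u) - 2a_h(u,u_h) + a_h(u_h,u_h),
\]
which makes sense because $a_h(\cdot,\cdot)$ restricted to $V$ agrees with $a(\cdot,\cdot)$ and is defined elementwise on $V_h$. Since $u\in V$ solves \eqref{variance}, testing with $v=u$ gives $a(u,u)=\lambda\|u\|_{0,\Om}^2=\lambda$; likewise, since $u_h\in V_h$ solves \eqref{discrete}, testing with $v_h=u_h$ gives $a_h(u_h,u_h)=\lambda_h\|u_h\|_{0,\Om}^2=\lambda_h$. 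Substituting these two facts yields
\[
|u-u_h|_{1,h}^2 = \lambda + \lambda_h - 2a_h(u,u_h).
\]

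Next I would handle the $L^2$ term similarly: expand
\[
\|u-u_h\|_{0,\Om}^2 = \|u\|_{0,\Om}^2 - 2(u,u_h) + \|u_h\|_{0,\Om}^2 = 2 - 2(u,u_h),
\]
so that $\lambda_h\|u-u_h\|_{0,\Om}^2 = 2\lambda_h - 2\lambda_h(u,u_h)$. Then I would assemble the claimed right-hand side of \eqref{originalId}:
\[
|u-u_h|_{1,h}^2 + 2\bigl(a_h(u,u_h)-\lambda_h(u,u_h)\bigr) - \lambda_h\|u-u_h\|_{0,\Om}^2,
\]
and substitute the two expressions just derived. The $-2a_h(u,u_h)$ from the first term cancels the $+2a_h(u,u_h)$ in the middle term; the $-2\lambda_h(u,u_h)$ in the middle term cancels the $+2\lambda_h(u,u_h)$ coming from $-\lambda_h\|u-u_h\|_{0,\Om}^2$. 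What remains is $\lambda + \lambda_h - 2\lambda_h$, which collapses to $\lambda - \lambda_h$, exactly the left-hand side.

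This is essentially a bookkeeping argument, so there is no serious obstacle; the only point requiring a word of care is that $a_h$ and $(\cdot,\cdot)$ make sense when one argument is the exact eigenfunction $u\in V$ and the other is the discrete function $u_h\in V_h$ — but this is immediate from the elementwise definition of $a_h$ and the fact that $V\subset L^2(\Om,\mathbb{R})$, so $(u,u_h)$ and $a_h(u,u_h)=\sum_{K}\int_K\nabla u\cdot\nabla_h u_h\,dx$ are both well defined. I would also remark, by way of motivation rather than as part of the proof, that the middle term $a_h(u,u_h)-\lambda_h(u,u_h)$ is the consistency error term familiar from Strang's lemma: it vanishes for conforming elements (where one could test \eqref{variance} with $u_h\in V$), recovering the simpler identity quoted just above from \cite{Zhang2006Enhancing}, and it is precisely this term whose high-accuracy computable approximation is the central difficulty addressed in the paper.
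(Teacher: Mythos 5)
Your proposal is correct and follows essentially the same route as the paper: expand $a_h(u-u_h,u-u_h)$, use $a(u,u)=\lambda$ and $a_h(u_h,u_h)=\lambda_h$ from the normalizations, and use $\|u-u_h\|_{0,\Om}^2+2(u,u_h)=2$ to rearrange. The bookkeeping checks out and nothing is missing.
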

\begin{proof}
By \eqref{variance} and \eqref{discrete},
$$
a_h(u-u_h,u-u_h)=a(u,u)+a_h(u_h,u_h)-2a_h(u,u_h)=\lambda + \lambda_h -2a_h(u,u_h).
$$
It follows that
\begin{equation}\label{idproof}
\lambda -\lambda_h =a_h(u-u_h,u-u_h)+2 a_h(u ,u_h )-2\lambda_h.
\end{equation}
Note that
$
 \|u - u_h\|_{0, \Om}^2 + 2(u,u_h)=2.
$
The identity \eqref{idproof} reads
$$
\lambda -\lambda_h =a_h(u-u_h,u-u_h)+2 (a_h(u ,u_h )-\lambda_h (u , u_h ))- \lambda_h \|u - u_h\|_{0, \Om}^2,
$$
which completes the proof.
\end{proof}

Compared to the identity for conforming elements, the identity \eqref{originalId} includes an extra term $a_h(u ,u_h )-\lambda_h (u , u_h )$. 
The nonconformity leads to this consistency error term
, which 
relates to values of the eigenfunctions. Since there is no high accuracy techniques in literature to recover eigenfunctions themselves, this extra term causes the difficulty to approximate discrete eigenvalues with high accuracy. 

\subsection{First type of asymptotically exact a posteriori error estimates}
For both the CR element and the ECR element, the canonical interpolation admits a commuting property \eqref{commuting}. By subtracting \eqref{commuting} from the extra term, the aforementioned consistency error can be expressed in terms of  the interpolation error. Taylor expansions in Lemma \ref{Lm:cr} imply that the dominant ingredient of the interpolation error is the second order derivatives of the eigenfunctions. This important property turns the function recovery problem into a gradient recovery problem. To be specific, for the CR element, thanks to the commuting property \eqref{commuting} of the canonical interpolation operator $\PiCR$,
\begin{equation}\label{commutId}
\lambda -\CRlam =|u - \CRu |_{1, h}^2-2\CRlam (u -\PiCR u ,\CRu )-\CRlam \|u -\CRu\|_{0,\Om}^2.
\end{equation}
The term $|u - \CRu |_{1, h}^2$ can be approximated with high accuracy by a direct application of gradient recovery techniques \cite{huang2010superconvergent,Zhang2005A,Zienkiewicz1992The} or superconvergent results in \cite{hu2018optimal,li2018global}. 
By the Taylor expansions in Lemma \ref{Lm:cr} and the superconvergent result in Lemma \ref{highorderestimate}, the interpolation error term $(u -\PiCR u ,\CRu )$ can be approximated with high accuracy by the use of these gradient recovery techniques. Then, asymptotically exact a posteriori error estimates for eigenvalues are designed from the identity \eqref{commutId}. This idea also works for eigenvalues by the ECR element.

Note that  within each element $K$, both $\nabla_h K_h\nabla_h \CRu$ and  $\nabla_h K_h\nabla_h \ECRu$ belong to $  P_0(K, \mathbb{R}^{2\times 2})$. Define the following a posteriori error estimates
\begin{equation}\label{FdefineCR}
\resizebox{0.91\textwidth}{!}{ $
F_{\rm CR, 1}^{\rm CR}=\parallel K_h\nabla_h \CRu  -\nabla_h \CRu\parallel_{0,\Om}^2-2\CRlam \sum_{K\in\cT_h} \int_K P_{\rm CR}^K(\nabla_h K_h\nabla_h \CRu ) \CRu\dx,
$}
\end{equation}
\begin{equation}\label{FdefineECR}
\resizebox{0.91\textwidth}{!}{ $
F_{\rm ECR, 1}^{\rm ECR}=\parallel K_h\nabla_h \ECRu  -\nabla_h \ECRu\parallel_{0,\Om}^2-2\ECRlam \sum_{K\in\cT_h} \int_K P_{\rm ECR}^K(\nabla_h K_h\nabla_h \ECRu )\ECRu\dx
$}
\end{equation}
with the polynomials $P_{\rm CR}^K$ and $P_{\rm ECR}^K$ defined in \eqref{Pdefine}.

\begin{Th}\label{Th:cr}
Let $(\lambda , u )$ be an eigenpair of \eqref{variance} with $u \in H^{\frac{7}{2}}(\Om,\mathbb{R})\cap H^1_0(\Om,\mathbb{R})$, and $(\CRlam,\CRu)$ be the corresponding approximate eigenpair of \eqref{discrete} in $\VCR$. The a posteriori error estimate $F_{\rm CR, 1}^{\rm CR}$ in \eqref{FdefineCR} satisfies
$$
\big |\lambda -\CRlam-F_{\rm CR, 1}^{\rm CR}\big |\leq C  h^{3}|\ln h|^{1/2}|u |_{\frac{7}{2},\Om}^2.
$$
\end{Th}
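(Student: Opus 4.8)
The plan is to bound the difference $\lambda-\CRlam-F_{\rm CR,1}^{\rm CR}$ by comparing it term-by-term against the exact identity \eqref{commutId}, which reads $\lambda-\CRlam = |u-\CRu|_{1,h}^2 - 2\CRlam(u-\PiCR u,\CRu) - \CRlam\|u-\CRu\|_{0,\Om}^2$. So the error splits into three pieces: (i) the difference between $|u-\CRu|_{1,h}^2$ and $\|K_h\nabla_h\CRu - \nabla_h\CRu\|_{0,\Om}^2$; (ii) the difference between $2\CRlam(u-\PiCR u,\CRu)$ and $2\CRlam\sum_K\int_K P_{\rm CR}^K(\nabla_h K_h\nabla_h\CRu)\CRu\dx$; and (iii) the term $\CRlam\|u-\CRu\|_{0,\Om}^2$, which has no counterpart in $F_{\rm CR,1}^{\rm CR}$ and must be shown to be of order $h^3|\ln h|^{1/2}|u|_{7/2,\Om}^2$ on its own. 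Piece (iii) is immediate from \eqref{CR:est} with $s=1$: $\|u-\CRu\|_{0,\Om} \le Ch^2|u|_{2,\Om}$, so its square is $O(h^4)$, comfortably within the bound.

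For piece (i), the plan is to write $|u-\CRu|_{1,h}^2 - \|K_h\nabla_h\CRu-\nabla_h\CRu\|_{0,\Om}^2 = (\nabla_h(u-\CRu) - (K_h\nabla_h\CRu-\nabla_h\CRu), \nabla_h(u-\CRu) + (K_h\nabla_h\CRu-\nabla_h\CRu))$, i.e. a difference of squares factored as a product. The first factor equals $\nabla u - K_h\nabla_h\CRu$, which by the superconvergence estimate \eqref{CRsuper} is $O(h^2|\ln h|^{1/2}|u|_{7/2,\Om})$; the second factor is bounded in $L^2$ by $\|\nabla_h(u-\CRu)\|_{0,\Om} + \|K_h\nabla_h\CRu-\nabla_h\CRu\|_{0,\Om}$, which is $O(h)$ by \eqref{CR:est} (the recovered term is within $O(h^2|\ln h|^{1/2})$ of $\nabla u$, hence within $O(h)$ of $\nabla_h\CRu$). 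The product is therefore $O(h^3|\ln h|^{1/2}|u|_{7/2,\Om}^2)$, as required.

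For piece (ii) — which I expect to be the main obstacle — the idea is to insert the second-order Lagrange interpolation $\Pi_{\rm P_2}u$ as an intermediary. On each element $K$, $\Pi_{\rm P_2}u\in P_2(K)$, so Lemma \ref{Lm:cr} gives $(I-\PiCR)\Pi_{\rm P_2}u = P_{\rm CR}^K(\nabla^2\Pi_{\rm P_2}u)$ exactly. One then compares, on each $K$: first, $(u-\PiCR u,\CRu)_{0,K}$ against $((I-\PiCR)\Pi_{\rm P_2}u,\CRu)_{0,K}$ — the difference involves $(I-\PiCR)(u-\Pi_{\rm P_2}u)$, controlled using the interpolation estimate \eqref{polationerr} ($|u-\Pi_{\rm P_2}u|_{0,\Om}\le Ch^3|u|_{3,\Om}$, plus the $L^2$-boundedness of $I-\PiCR$ via scaling) and $\|\CRu\|_{0,\Om}\le 1$, giving $O(h^3)$; second, $\int_K P_{\rm CR}^K(\nabla^2\Pi_{\rm P_2}u)\CRu\dx$ against $\int_K P_{\rm CR}^K(\nabla_h K_h\nabla_h\CRu)\CRu\dx$ — here one uses that $P_{\rm CR}^K$ depends linearly (through the constants $\|\mathbf{e}_i^T H_h\mathbf{e}_j\|_{0,K}/|K|^{1/2}$) on $H_h$, so the difference is driven by $\|\nabla^2\Pi_{\rm P_2}u - \nabla_h K_h\nabla_h\CRu\|_{0,\Om}$, which is $O(h|\ln h|^{1/2}|u|_{7/2,\Om})$ by Lemma \ref{highorderestimate} combined with \eqref{polationerr}. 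Multiplying by $\CRlam$ (bounded) and $\|\CRu\|_{0,\Om}$, and noting $P_{\rm CR}^K$ scales like $h^2$ times the constant $|H_h|$ relative to $|K|^{1/2}$ — more precisely $\|P_{\rm CR}^K(H_h)\|_{0,K}\le Ch_K\|H_h\|_{0,K}$ after the $|K|^{1/2}$ normalization cancels — this yields $O(h^2\cdot h|\ln h|^{1/2})=O(h^3|\ln h|^{1/2})$. The delicate point is tracking the scaling in the definition \eqref{Pdefine} of $P_{\rm CR}^K$ carefully enough to see that the map $H_h\mapsto P_{\rm CR}^K(H_h)$ is bounded from $L^2(K,\R^{2\times2})$ into $L^2(K)$ with the right power of $h_K$, and that the difference of the two $P_{\rm CR}^K$ evaluations is Lipschitz in this norm with constant $O(h_K)$; once that is in place, collecting the three estimates and summing over $K$ (using shape-regularity and $h_K\le h$) completes the proof.
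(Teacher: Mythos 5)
Your proposal follows essentially the same route as the paper: the identical three-way decomposition against \eqref{commutId}, the same treatment of the $L^2$ term and of the difference of squares via \eqref{CR:est} and \eqref{CRsuper}, and for the interpolation term the same two-step comparison (your insertion of $\Pi_{\rm P_2}u$ is just a concrete realization of the Bramble--Hilbert step the paper uses to obtain \eqref{2term}). Two small corrections that do not affect the validity of the argument: $P_{\rm CR}^K$ is not linear in $H_h$ --- it is built from the norms $\|\bold{e}_i^T H_h\bold{e}_j\|_{0,K}$ --- but the reverse triangle inequality supplies exactly the Lipschitz bound you invoke later; and the correct elementwise scaling is $\|P_{\rm CR}^K(H_h)\|_{0,K}\le C h_K^2\|H_h\|_{0,K}$ rather than $C h_K\|H_h\|_{0,K}$, which is consistent with the $O(h^2\cdot h|\ln h|^{1/2})$ count you actually use to conclude.
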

\begin{proof}
By the definition of $F_{\rm CR, 1}^{\rm CR}$ in \eqref{FdefineCR} and \eqref{commutId},
\begin{equation}\label{errorexpand}
\resizebox{0.91\textwidth}{!}{ $
\begin{split}
\lambda -\CRlam-F_{\rm CR, 1}^{\rm CR}=&|u -\CRu |_{1, h}^2 - \parallel K_h\nabla_h \CRu  -\nabla_h \CRu\parallel_{0,\Om}^2-\CRlam\|u -\CRu\|_{0,\Om}^2\\
& -2\CRlam\sum_{K\in\cT_h} \big (u -\PiCR u -P_{\rm CR}^K(\nabla^2 u ),\CRu\big )_{0,K}\\
&-2\CRlam\sum_{K\in\cT_h} \big (P_{\rm CR}^K(\nabla^2 u )-P_{\rm CR}^K(\nabla_h K_h\nabla_h \CRu  ),\CRu\big )_{0,K}.
\end{split}
$}
\end{equation}
Thanks to \eqref{CR:est} and \eqref{CRsuper},
\begin{equation}\label{1term}
\begin{split}
\big ||u -\CRu |_{1, h}^2 - \parallel K_h\nabla_h \CRu  -\nabla_h \CRu\parallel_{0,\Om}^2\big |\leq C  h^{3}|\ln h|^{1/2}|u |_{\frac{7}{2},\Om}^2.
\end{split}
\end{equation}
It follows from the Bramble-Hilbert lemma and Lemma \ref{Lm:cr} that
\begin{equation}\label{2term}
\big | \sum_{K\in\cT_h}\big (u -\PiCR u -P_{\rm CR}^K(\nabla^2 u ),\CRu\big )_{0,K}\big |\leq C  h^3|u |_{3,\Om}.
\end{equation}
Due to the triangle inequality,
\begin{equation}\label{3temp1}
\begin{split}
&\big |\sum_{K\in\cT_h}\big (P_{\rm CR}^K(\nabla^2 u )-P_{\rm CR}^K(\nabla_h K_h\nabla_h \CRu  ),\CRu\big )_{0,K}\big |
\\
\leq & \sum_{K\in\cT_h} \|P_{\rm CR}^K(\nabla^2 u )-P_{\rm CR}^K(\nabla_h K_h\nabla_h \CRu) \|_{0,K} \|\CRu\|_{0,K} .
\end{split}
\end{equation}
According to Lemma \ref{Lm:cr} and the Bramble-Hilbert lemma, 
$$
\|P_{\rm CR}^K(H_h ) \|_{0,K}\leq C h^2\|H_h\|_{0,K}.
$$
By the definition of $P_{\rm CR}^K$, 
\begin{equation}\label{3temp2}
\|P_{\rm CR}^K(\nabla^2 u )-P_{\rm CR}^K(\nabla_h K_h\nabla_h \CRu) \|_{0,K}\leq C h^2\|\nabla^2 u-K_h\nabla_h \CRu \|_{0,K}.
\end{equation}
A combination of \eqref{3temp1}, \eqref{3temp2} and Lemma \ref{highorderestimate} leads to
\begin{equation}\label{3term}
\big |\sum_{K\in\cT_h}\big (P_{\rm CR}^K(\nabla^2 u )-P_{\rm CR}^K(\nabla_h K_h\nabla_h \CRu  ),\CRu\big )_{0,K}\big |
\leq C   h^{3}|\ln h|^{1/2}|u |_{\frac{7}{2},\Om}^2.
\end{equation}
A substitution of \eqref{CR:est}, \eqref{1term}, \eqref{2term} and \eqref{3term} into \eqref{errorexpand} concludes
$$
\big |\lambda -\CRlam-F_{\rm CR, 1}^{\rm CR}\big |\leq C  h^{3}|\ln h|^{1/2}|u |_{\frac{7}{2},\Om}^2
$$
and completes the proof.
\qed
\end{proof}

Compared to the terms in \eqref{commutId}, the above analysis shows that the term 

\noindent $\parallel K_h\nabla_h \CRu  -\nabla_h \CRu\parallel_{0,\Om}^2$  in $F_{\rm CR, 1}^{\rm CR}$ approximates $|u - \CRu |_{1, h}^2$  with high accuracy, and $\sum_{K\in\cT_h} \int_K P_{\rm CR}^K(\nabla_h K_h\nabla_h \CRu ) \CRu\dx$ approximates  $ (u -\PiCR u ,\CRu )$ with high accuracy. Notice that other a posteriori error estimates can be constructed following \eqref{FdefineCR}  with different recovered gradients from $K_h\nabla_h \CRu $. The resulted a posteriori error estimates  are asymptotically exact as long as the recovered gradient admits a superconvergence result.

Similarly, the a posteriori error estimate $F_{\rm ECR, 1}^{\rm ECR}$ in \eqref{FdefineECR} is also asymptotically exact as presented in the following theorem.
\begin{Th}\label{Th:ecr}
Let $(\lambda , u )$ be an eigenpair of \eqref{variance} with $u \in H^{\frac{7}{2}}(\Om,\mathbb{R}) \cap  H^1_0(\Om,\mathbb{R})$, and $(\ECRlam,\ECRu)$ be the corresponding approximate eigenpair of \eqref{discrete} in $\VECR$. Then,
$$
\big |\lambda -\ECRlam-F_{\rm ECR, 1}^{\rm ECR}\big |\leq C  h^{3}|\ln h|^{1/2}|u |_{\frac{7}{2},\Om}^2.
$$
\end{Th}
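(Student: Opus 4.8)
The plan is to mirror the proof of Theorem \ref{Th:cr} line by line, replacing the CR quantities by their ECR counterparts; the structure is identical because the ECR element enjoys exactly the same ingredients, namely the commuting property \eqref{commuting}, the Taylor expansion identity $(I-\PiECR)w = P_{\rm ECR}^K(\nabla^2 w)$ from Lemma \ref{Lm:cr}, the a priori estimate \eqref{ECR:est}, and a superconvergent gradient recovery. First I would record the analogue of \eqref{commutId}: using the commuting property of $\PiECR$ in \eqref{commuting}, the identity \eqref{originalId} becomes
\begin{equation*}
\lambda -\ECRlam =|u - \ECRu |_{1, h}^2-2\ECRlam (u -\PiECR u ,\ECRu )-\ECRlam \|u -\ECRu\|_{0,\Om}^2.
\end{equation*}
Subtracting the definition of $F_{\rm ECR, 1}^{\rm ECR}$ in \eqref{FdefineECR} and inserting $\pm P_{\rm ECR}^K(\nabla^2 u)$ then yields the exact analogue of the three-line expansion \eqref{errorexpand}, with three error contributions to bound.

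The three contributions are handled exactly as in the CR case. The first, $\big||u-\ECRu|_{1,h}^2 - \|K_h\nabla_h\ECRu - \nabla_h\ECRu\|_{0,\Om}^2\big|$, is controlled by writing it as an inner product of $\nabla_h(u-\ECRu) - (K_h\nabla_h\ECRu - \nabla_h\ECRu) = (\nabla u - K_h\nabla_h\ECRu)$ against $\nabla_h(u-\ECRu) + (K_h\nabla_h\ECRu - \nabla_h\ECRu)$ and applying Cauchy--Schwarz together with \eqref{ECR:est} and the ECR analogue of the superconvergence estimate \eqref{CRsuper}. The second contribution, $\big|\sum_K (u-\PiECR u - P_{\rm ECR}^K(\nabla^2 u),\ECRu)_{0,K}\big|$, is $O(h^3|u|_{3,\Om})$ by the Bramble--Hilbert lemma, since Lemma \ref{Lm:cr} guarantees the integrand vanishes on $P_2(K)$; this uses that $\PiECR$ preserves $P_1(K)$ and that $P_{\rm ECR}^K(\nabla^2 w)$ reproduces $(I-\PiECR)w$ for $w\in P_2(K)$. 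The third contribution is estimated by the triangle inequality followed by the bound $\|P_{\rm ECR}^K(H_h)\|_{0,K}\le Ch^2\|H_h\|_{0,K}$ and its Lipschitz consequence $\|P_{\rm ECR}^K(\nabla^2 u) - P_{\rm ECR}^K(\nabla_h K_h\nabla_h\ECRu)\|_{0,K}\le Ch^2\|\nabla^2 u - K_h\nabla_h\ECRu\|_{0,K}$, and then by the ECR analogue of Lemma \ref{highorderestimate}. Summing over $K$ with $\|\ECRu\|_{0,\Om}=1$ gives the $O(h^3|\ln h|^{1/2}|u|_{7/2,\Om}^2)$ bound. The remaining cross term $\ECRlam\|u-\ECRu\|_{0,\Om}^2$ is of order $h^4$ by \eqref{ECR:est}, hence negligible.

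The one genuine gap is that the excerpt states the superconvergence result \eqref{CRsuper} and Lemma \ref{highorderestimate} only for the CR element; I would need the ECR counterparts $\|\nabla u - K_h\nabla_h\ECRu\|_{0,\Om}\le Ch^2|\ln h|^{1/2}|u|_{7/2,\Om}$ and $\|\nabla^2 u - \nabla_h K_h\nabla_h\ECRu\|_{0,\Om}\le Ch|\ln h|^{1/2}|u|_{7/2,\Om}$. These follow from the same post-processing operator $K_h$ applied to $\ECRu$ together with the ECR superconvergence theory in \cite{hu2018optimal,hu2019asymptotic}, using that the ECR eigenfunction superconverges to the ECR source-problem solution and that the $P_0$ extra bubble in $\text{ECR}(K)$ does not spoil the one-order gain; the Hessian estimate then comes from the same inverse-inequality argument via the $P_2$ Lagrange interpolant as in the proof of Lemma \ref{highorderestimate}. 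So the main obstacle is not in Theorem \ref{Th:ecr} itself --- which is a routine transcription --- but in having the ECR superconvergence inputs available, which I would either cite or state as a companion lemma preceding the theorem.
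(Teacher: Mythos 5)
Your proposal is correct and follows exactly the route the paper intends: the paper gives no separate proof for this theorem, stating only that it follows ``similarly'' to Theorem \ref{Th:cr}, and your line-by-line transcription (commuting property of $\PiECR$, the Taylor identity $(I-\PiECR)w=P_{\rm ECR}^K(\nabla^2 w)$, and the three-term error splitting) is precisely that argument. The ``gap'' you flag --- the ECR analogues of \eqref{CRsuper} and Lemma \ref{highorderestimate} --- is real but is covered by the cited superconvergence results of \cite{hu2018optimal,hu2019asymptotic}, which the paper explicitly states hold for both the CR and ECR elements.
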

\begin{remark}\label{remark:morley}
Suppose that $(\lambda , u )$ is an eigenpair of the biharmonic operator with $u \in H^{\frac{9}{2}}(\Om,\mathbb{R})\cap H^2_0(\Om,\mathbb{R})$, and $(\Mlam, \Mu)$ is the corresponding approximate eigenpair by the Morley element on an uniform triangulation $\cT_h$. Thanks to the superconvergence of the Hellan--Herrmann--Johnson element and its equivalence to the Morley element, the recovered Hessian $K_h\nabla_h^2 \Mu$ superconverges to $ \nabla^2 u$. Since the canonical interpolation operator of the Morley element also admits the commuting property, a similar procedure produces asymptotically exact a posteriori error estimates for eigenvalues by the Morley element.
\end{remark}

\subsection{Second type of asymptotically exact a posteriori error estimates}
The second type of asymptotically exact a posteriori error estimates works for any nonconforming elements as long as the corresponding discrete space contains a conforming subspace and there exists some high accuracy gradient recovery technique for the elements.

The canonical interpolation of a conforming element is employed here to approximate the consistency error term. 
Take the CR element for example,
\begin{equation}\label{PostId2}
\begin{split}
\lambda -\CRlam =&|u -\CRu |_{1, h}^2+2a( u-\PiP u, \CRu)-2\CRlam (u -\PiP u ,\CRu )\\
&-\CRlam \|u -\CRu\|_{0,\Om}^2,
\end{split}
\end{equation}
and the canonical interpolation $\PiP u$ of the conforming linear element admits the same value of $u$ on each vertex.
The main idea here is to rewrite the tricky term $a( u-\PiP u, \CRu)$
by the Green identity as follows.
\begin{equation}\label{Post2average}
a( u-\PiP u, \CRu)=
\sum_{e\in \cE_h^i} \int_{e}(u-\PiP u)[{\partial \CRu\over \partial n}]\ds.
\end{equation}
On each interior edge, the interpolation error is approximated by the average of recovered interpolation errors on two adjacent elements. 

Define the following a posteriori error estimates
\begin{equation}\label{FdefineCR2}
\resizebox{0.9\textwidth}{!}{ $
\begin{array}{rl}
F_{\rm CR, 2}^{\rm CR}=&\parallel K_h\nabla_h \CRu  -\nabla_h \CRu\parallel_{0,\Om}^2
+2\sum\limits_{e\in \cE_h^i} \int_{e}\big \{P_{\rm P_1}^K(\nabla_h K_h\nabla_h \CRu )\big \}\big [\frac{\partial \CRu}{\partial n}\big ]\ds\\
&-2\CRlam \sum\limits_{K\in\cT_h} \int_K P_{\rm P_1}^K(\nabla_h K_h\nabla_h \CRu ) \CRu\dx,
\end{array}
$}
\end{equation}
\begin{equation}\label{FdefineECR2}
\resizebox{0.9\textwidth}{!}{ $
\begin{split}
F_{\rm ECR, 2}^{\rm ECR}=&\parallel K_h\nabla_h \ECRu  -\nabla_h \ECRu\parallel_{0,\Om}^2
+2\sum\limits_{e\in \cE_h^i} \int_{e}\big \{P_{\rm P_1}^K( \nabla_h K_h\nabla_h \ECRu)\big \}\big [\frac{\partial \ECRu}{\partial n}\big ]\ds\\
&-2\sum\limits_{K\in\cT_h} \int_K P_{\rm P_1}^K(\nabla_h K_h\nabla_h \ECRu )( \Delta_h \ECRu + \ECRlam \ECRu)\dx,
\end{split}
$}
\end{equation}
with the polynomial $P_{\rm P_1}^K( \cdot )$ defined in \eqref{Pdefine}.

\begin{Th}\label{Th:cr2}
Let $(\lambda , u )$ be an eigenpairs of \eqref{variance} with $u \in H^{\frac{7}{2}}(\Om,\mathbb{R})\cap H^1_0(\Om,\mathbb{R})$, and $(\CRlam,\CRu)$ be the corresponding approximate eigenpairs of \eqref{discrete} in $\VCR$. The a posteriori error estimate $F_{\rm CR, 2}^{\rm CR}$ in \eqref{FdefineCR} satisfies
$$
\big |\lambda -\CRlam-F_{\rm CR, 2}^{\rm CR}\big |\leq C  h^{3}|\ln h|^{1/2}|u |_{\frac{7}{2},\Om}^2.
$$
\end{Th}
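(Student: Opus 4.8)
The plan is to mimic the proof of Theorem \ref{Th:cr}, but now starting from the decomposition \eqref{PostId2} and the Green-identity rewriting \eqref{Post2average} of the consistency term. Writing $\delta := \lambda - \CRlam - F_{\rm CR, 2}^{\rm CR}$, I would first subtract the definition \eqref{FdefineCR2} from \eqref{PostId2} term by term to obtain $\delta$ as a sum of four groups: (i) the gradient-recovery discrepancy $\bigl| |u-\CRu|_{1,h}^2 - \| K_h\nabla_h\CRu - \nabla_h\CRu\|_{0,\Om}^2\bigr|$, which is $\le C h^3|\ln h|^{1/2}|u|_{7/2,\Om}^2$ exactly as in \eqref{1term} via \eqref{CR:est} and \eqref{CRsuper}; (ii) the mass-term discrepancy $2\CRlam\sum_K\bigl(u-\PiP u - P_{\rm P_1}^K(\nabla_h K_h\nabla_h\CRu),\CRu\bigr)_{0,K}$, handled by splitting into $(u-\PiP u - P_{\rm P_1}^K(\nabla^2 u),\CRu)$, controlled by the Bramble-Hilbert lemma together with the $P_1$ line of Lemma \ref{Lm:cr}, plus $(P_{\rm P_1}^K(\nabla^2 u - K_h\nabla_h\CRu),\CRu)$, controlled by the $L^2$-stability estimate $\|P_{\rm P_1}^K(H_h)\|_{0,K}\le Ch^2\|H_h\|_{0,K}$ and Lemma \ref{highorderestimate} — this reproduces \eqref{2term} and \eqref{3term} verbatim; and (iii) the new edge term $2\sum_{e\in\cE_h^i}\int_e\bigl(\{u-\PiP u\} - \{P_{\rm P_1}^K(\nabla_h K_h\nabla_h\CRu)\}\bigr)[\partial\CRu/\partial n]\ds$, which is the genuinely new ingredient.

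For group (iii) I would insert the intermediate quantity $\{P_{\rm P_1}^K(\nabla^2 u)\}$ and split the edge term into $E_1 = 2\sum_e\int_e \{u-\PiP u - P_{\rm P_1}^K(\nabla^2 u)\}[\partial\CRu/\partial n]\ds$ and $E_2 = 2\sum_e\int_e \{P_{\rm P_1}^K(\nabla^2 u - K_h\nabla_h\CRu)\}[\partial\CRu/\partial n]\ds$. On each edge $[\partial\CRu/\partial n]$ is a piecewise constant, and because $\CRu$ satisfies the CR edge conditions the jump of its normal derivative is itself a superconvergent quantity; more usefully, a standard trace/scaling estimate gives $\|[\partial\CRu/\partial n]\|_{0,e}\le C h^{-1/2}\|\nabla_h(u-\CRu)\|_{0,\omega_e} + \text{h.o.t.}$ on the edge patch $\omega_e$, hence by \eqref{CRsuper}-type superconvergence $\|[\partial\CRu/\partial n]\|_{0,e}\le Ch^{3/2}|\ln h|^{1/2}|u|_{7/2,\omega_e}$ after using that $[\partial\CRu/\partial n]$ annihilates the jump of $\nabla u\cdot n$ (which vanishes for the exact solution). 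For $E_1$, the Bramble-Hilbert lemma on the patch gives $\|u-\PiP u - P_{\rm P_1}^K(\nabla^2 u)\|_{0,e}\le Ch^{5/2}|u|_{3,\omega_e}$ (one extra half power from the trace of the cubic-order remainder), so $E_1$ is controlled after summing by Cauchy-Schwarz; for $E_2$, $\|P_{\rm P_1}^K(\nabla^2 u - K_h\nabla_h\CRu)\|_{0,e}\le Ch^{3/2}\|\nabla^2 u - K_h\nabla_h\CRu\|_{0,\omega_e}$ by \eqref{3temp2}-type reasoning plus a trace inequality, and Lemma \ref{highorderestimate} finishes it. Both pieces are $O(h^3|\ln h|^{1/2}|u|_{7/2,\Om}^2)$ after a Cauchy-Schwarz in the edge sum, using $\sum_e$ of the patchwise seminorms $\le C|u|_{7/2,\Om}^2$ by finite overlap.

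Collecting groups (i), (ii), (iii) then yields $|\delta|\le Ch^3|\ln h|^{1/2}|u|_{7/2,\Om}^2$, which is the claim. The one subtlety worth flagging is that the statement as printed refers to ``$F_{\rm CR, 2}^{\rm CR}$ in \eqref{FdefineCR}'' whereas the relevant definition is \eqref{FdefineCR2}; I would read it as \eqref{FdefineCR2}. The main obstacle is genuinely group (iii): one must be careful that the trace terms on edges only cost a \emph{half} power of $h$ relative to the element estimates, and that the jump $[\partial\CRu/\partial n]$ truly carries the full superconvergence order — this requires invoking that $\int_e[\partial\CRu/\partial n]\ds$-type orthogonality together with the superconvergent recovery, rather than the mere $O(h^s)$ bound from \eqref{CR:est}. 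Everything else is a bookkeeping re-run of the proof of Theorem \ref{Th:cr} with the conforming interpolation $\PiP$ in place of $\PiCR$.
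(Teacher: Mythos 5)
Your decomposition is the same as the paper's: reduce everything to the interior-edge term, insert $P_{\rm P_1}^K(\nabla^2 u)$ as an intermediate quantity, and estimate the two resulting pieces by Cauchy--Schwarz on each edge together with trace and Bramble--Hilbert arguments; your groups (i) and (ii) are indeed verbatim re-runs of the proof of Theorem \ref{Th:cr}, exactly as the paper treats them. The problem is your handling of the jump factor in group (iii). You claim $\|[\partial\CRu/\partial n]\|_{0,e}\le Ch^{3/2}|\ln h|^{1/2}|u|_{\frac{7}{2},\omega_e}$ by passing through $h^{-1/2}\|\nabla_h(u-\CRu)\|_{0,\omega_e}$ and then invoking ``\eqref{CRsuper}-type superconvergence''. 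That step is not justified: \eqref{CRsuper} is a statement about the \emph{recovered} gradient $K_h\nabla_h\CRu$, not about $\nabla_h\CRu$ itself, and the raw error $\|\nabla_h(u-\CRu)\|_{0,\Om}$ is only $O(h)$ by \eqref{CR:est}. The $\int_e[\partial\CRu/\partial n]\ds$-type orthogonality you gesture at does not upgrade an $L^2(e)$ bound to order $h^{3/2}$ without a genuine supercloseness result for the un-recovered gradient, which is neither invoked in the paper nor available from the results you cite.

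The good news is that this step is superfluous, so your ``main obstacle'' is a phantom. Since $u\in H^2$, the jump $[\partial u/\partial n]$ vanishes on interior edges, hence $[\partial\CRu/\partial n]=[\partial(\CRu-u)/\partial n]$, and the trace theorem together with the plain first-order estimate \eqref{CR:est} already gives $\|[\partial(\CRu-u)/\partial n]\|_{0,e}\le Ch^{1/2}|u|_{2,\omega_e}$; this is precisely what the paper uses. Because the other factor satisfies $\|u-\PiP u-\{P_{\rm P_1}^K(\nabla_h K_h\nabla_h\CRu)\}\|_{0,e}\le Ch^{5/2}|\ln h|^{1/2}|u|_{\frac{7}{2},\omega_e}$ (your $E_1$/$E_2$ split, with the trace costing the expected half power relative to the element estimates and Lemma \ref{highorderestimate} controlling $E_2$), each edge contributes $O(h^{1/2}\cdot h^{5/2})=O(h^3)$ up to the logarithm, and the finite-overlap Cauchy--Schwarz sum over edges yields the claimed bound. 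Your proof closes once you replace the unjustified $h^{3/2}$ jump bound by the elementary $h^{1/2}$ one; as written, that particular step would not survive scrutiny, and the accompanying claim that the ``mere $O(h^s)$ bound from \eqref{CR:est}'' is insufficient is a miscount of the powers of $h$.
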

\begin{proof}
According to the analysis of Theorem \ref{Th:cr}, the definition of $F_{\rm CR, 2}^{\rm CR}$, \eqref{PostId2} and \eqref{Post2average}, it only remains to prove that
\begin{equation*}
|\sum_{e\in \cE_h^i} \int_{e}(u-\PiP u - \big \{P_{\rm P_1}^K(\nabla_h K_h\nabla_h \CRu )\big \})\big [\frac{\partial \CRu}{\partial n}\big ]\ds
\leq C  h^3|\ln h|^{1/2}|u |_{\frac{7}{2},\Om}^2.
\end{equation*}
The Cauchy-Schwarz inequality implies that
\begin{equation}\label{CRterm1}
\begin{split}
&\int_{e}(u-\PiP u - \big \{P_{\rm P_1}^K(\nabla_h K_h\nabla_h \CRu )\big \})\big [\frac{\partial \CRu}{\partial n}\big ]\ds \\
\leq C  &\|u-\PiP u - \big \{P_{\rm P_1}^K(\nabla_h K_h\nabla_h \CRu )\big \}\|_{0, e}\| \big [\frac{\partial (\CRu - u)}{\partial n}\big ]\|_{0, e}.
\end{split}
\end{equation}
Due to the trace theorem and \eqref{CR:est},
\begin{equation}\label{CRterm2}
\| \big [\frac{\partial (\CRu - u)}{\partial n}\big ]\|_{0, e}\leq C  h^{1/2} |u |_{2,\omega_e}.
\end{equation}
According to the Bramble-Hilbert lemma, the trace theorem and Lemma \ref{Lm:cr},
\begin{equation}\label{CRterm3}
\|u-\PiP u - P_{\rm P_1}^K(\nabla^2 u )\|_{0, e}\leq C  h^{5/2}|u|_{3, \omega_e}.
\end{equation}
A combination of Lemma \ref{highorderestimate} and the trace theorem  gives
\begin{equation}\label{CRterm4}
\|P_{\rm P_1}^K(\nabla^2 u ) - P_{\rm P_1}^K(\nabla_h K_h \nabla_h \CRu )\|_{0, e}\leq C  h^{5/2}|\ln h|^{1/2}|u |_{\frac{7}{2},\omega_e}.
\end{equation}
A substitution of \eqref{CRterm2}, \eqref{CRterm3} and \eqref{CRterm4} into \eqref{CRterm1} concludes
\begin{equation*}
|\sum_{e\in \cE_h^i} \int_{e}(u-\PiP u - \big \{P_{\rm P_1}^K(\nabla_h K_h\nabla_h \CRu )\big \})\big [\frac{\partial \CRu}{\partial n}\big ]\ds
\leq C  h^3|\ln h|^{1/2}|u |_{\frac{7}{2},\Om}^2
\end{equation*}
and completes the proof.
\end{proof}

Compared to the terms in \eqref{PostId2}, the above analysis shows that the term 

\noindent $\parallel K_h\nabla_h \CRu  -\nabla_h \CRu\parallel_{0,\Om}^2$, 
$\sum\limits_{e\in \cE_h^i} \int_{e}\big \{P_{\rm P_1}^K(\nabla_h K_h\nabla_h \CRu )\big \}\big [\frac{\partial \CRu}{\partial n}\big ]\ds$
and 

\noindent $\sum\limits_{K\in\cT_h} \int_K P_{\rm P_1}^K(\nabla_h K_h\nabla_h \CRu ) \CRu\dx $   in the a posteriori error estimate 
$F_{\rm CR, 2}^{\rm CR}$ approximate  $|u - \CRu |_{1, h}^2$, 
$a( u-\PiP u, \CRu)$ and $(u -\PiP u ,\CRu )$ with high accuracy, respectively.

The a posteriori error estimates $F_{\rm ECR, 2}^{\rm ECR}$ in \eqref{FdefineECR2} are designed following the procedure for the CR element. Different from \eqref{Post2average} for the CR element, 
\begin{equation}\label{ecr:deco}
a(u-\PiP u, \ECRu)=\sum_{e\in \cE_h} \int_{e}(u-\PiP u)[{\partial \ECRu\over \partial n}]\ds - (u-\PiP u, \Delta_h \ECRu),
\end{equation}
including an extra term $\sum_{K\in\cT_h}\int_K (u-\PiP u)\Delta_h \ECRu\dx$. Thus, the error estimate $F_{\rm ECR, 2}^{\rm ECR}$ for the ECR element contains a corresponding term
$$
-2\sum\limits_{K\in\cT_h} \int_K P_{\rm P_1}^K(\nabla_h K_h\nabla_h \ECRu ) \Delta_h \ECRu \dx
$$
to approximate the aforementioned extra term with high accuracy. A similar analysis to the one in Theorem \ref{Th:cr2} proves that the a posteriori error estimate $F_{\rm ECR, 2}^{\rm ECR}$ in \eqref{FdefineECR2} is also asymptotically exact. 
The analysis in Theorem  \ref{Th:cr2} for the second type of a posteriori error estimates is of optimal order as verified in Figure \ref{fig:squareCRECRP1} and Figure \ref{fig:squareNeumannCRECRP1}.
Note that other asymptotically exact a posteriori error estimates can be constructed following \eqref{FdefineCR2} and \eqref{FdefineECR2} but with different high accuracy recovered gradients. 

%

\section{Postprocessing algorithm}\label{sec:algo}
This section proposes two methods to improve accuracy of approximate eigenvalues by employing asymptotically exact a posteriori error estimates.

For the ease of presentation, we list the notations of different approximate eigenvalues and a posteriori error estimates here. Denote the approximate eigenpairs by the CR element, the ECR element and the conforming linear element on $\cT_h$ by $(\CRlam , \CRu )$, $(\ECRlam , \ECRu )$ and  $(\Ponelam, \Poneu)$, respectively. 
Apply the average-projection in \cite{Hu2015Constructing} to the approximate eigenfunctions  $\CRu $, and denote the resulted eigenfunctions by $\tilde{u}_{\rm P_1^{\ast}}$. Define
\begin{equation}\label{projectionP1}
\PoneAstu:= \tilde{u}_{\rm P_1^{\ast}}/\parallel \tilde{u}_{\rm P_1^{\ast}}\parallel_{0,\Om}\ \text{ and }\ \PoneAstlam:=a_h(\PoneAstu,\PoneAstu).
\end{equation}
Denote the PPR postprocessing technique in \cite{Zhang2005A} by the operator $\overline{K}_h$ and 
$$
F_{\rm P_1}^{\rm P_1}:=\parallel \overline{K}_h\nabla \Poneu  -\nabla \Poneu\parallel_{0,\Om}^2.
$$
According to \cite{Zhang2006Enhancing}, $F_{\rm P_1}^{\rm P_1}$ is an asymptotically exact a posteriori error estimate for eigenvalues $\Ponelam$.  Replace the recovered gradient $\overline{K}_h\nabla \Poneu$ in the above definition by $K_h\nabla_h \CRu$ and $\overline{K}_h\nabla \PoneAstu$, and denote the resulted a posteriori error estimates by $F_{\rm P_1}^{\rm CR}$ and $F_{\rm P_1}^{\rm P_1^\ast}$, respectively.   Lemma \ref{highorderestimate} reveals that the error estimate $F_{\rm P_1}^{\rm CR}$ is asymptotically exact.

For eigenvalues by the CR element, replace the recovered gradient $K_h\nabla_h \CRu$ in \eqref{FdefineCR} by $\overline{K}_h\nabla \Poneu$ and $\overline{K}_h\nabla \PoneAstu$, and denote the resulted first type of a posteriori error estimates by $F_{\rm CR, 1}^{\rm P_1}$ and $F_{\rm CR, 1}^{\rm P_1^\ast}$, respectively. Similarly, replace the recovered gradient $K_h\nabla_h \CRu$ in \eqref{FdefineCR2} by $\overline{K}_h\nabla \Poneu$ and $\overline{K}_h\nabla \PoneAstu$, and denote the resulted second type of a posteriori error estimates by $F_{\rm CR, 2}^{\rm P_1}$ and $F_{\rm CR, 2}^{\rm P_1^\ast}$, respectively. According to \cite{Zhang2005A}, the recovered gradient $\overline{K}_h\nabla \Poneu$ admits a similar superconvergence result as $K_h\nabla_h \CRu$ in \eqref{CRsuper}. Thus, both error estimates $F_{\rm CR, 1}^{\rm P_1}$ and $F_{\rm CR, 2}^{\rm P_1}$ are asymptotically exact too.

\subsection{Recovered eigenvalues}

The first approach is to correct the discrete eigenvalues by the corresponding asymptotically exact a posteriori error estimates. A direct application of Theorem \ref{Th:cr} and Theorem \ref{Th:cr2} proves the higher accuracy of the resulted recovered eigenvalues than that of the original ones.

\begin{table}[!ht]
\footnotesize
\label{tab:definition}
\begin{tabular}{|c|c|c|c|c|}
\hline
 & Recovered gradients &  \multicolumn{2}{c|}{Recovered eigenvalues}  &  order\\ \hline
\multirow{3}{*}{$\CRlam$} &  $K_h\nabla_h \CRu$& $\lambda_{\rm CR, 1}^{\rm R,\ CR}:=\CRlam + F_{\rm CR, 1}^{\rm CR}$ &  $\lambda_{\rm CR, 2}^{\rm R,\ CR}:=\CRlam + F_{\rm CR, 2}^{\rm CR}$  &  3\\ \cline{2-5} 
                  &  $\overline{K}_h\nabla \Poneu$ & $\lambda_{\rm CR, 1}^{\rm R,\ P_1}:=\CRlam + F_{\rm CR, 1}^{\rm P_1}$ &  $\lambda_{\rm CR, 2}^{\rm R,\ P_1}:=\CRlam + F_{\rm CR, 2}^{\rm P_1}$ & 3 \\ \cline{2-5} 
                  &  $\overline{K}_h\nabla \PoneAstu$&  $\lambda_{\rm CR, 1}^{\rm R,\ P_1^\ast}:=\CRlam + F_{\rm CR, 1}^{\rm P_1^\ast}$ &  $\lambda_{\rm CR, 2}^{\rm R,\ P_1^\ast}:=\CRlam + F_{\rm CR, 2}^{\rm P_1^\ast}$& -  \\ \hline
\multirow{3}{*}{$\Ponelam$} & $K_h\nabla_h \CRu$ & \multicolumn{2}{c|}{$\lambda_{\rm P_1}^{\rm R,\ CR}:=\Ponelam - F_{\rm P_1}^{\rm CR}$ }&  3\\ \cline{2-5} 
                  &  $\overline{K}_h\nabla \Poneu$  &\multicolumn{2}{c|}{$\lambda_{\rm P_1}^{\rm R,\ P_1}:=\Ponelam - F_{\rm P_1}^{\rm P_1}$} &  3\\ \cline{2-5} 
                  &  $\overline{K}_h\nabla \PoneAstu$&\multicolumn{2}{c|}{$\lambda_{\rm P_1}^{\rm R,\ P_1^\ast}:=\Ponelam - F_{\rm P_1}^{\rm P_1^\ast}$} &  -\\ \hline
\multirow{3}{*}{$\PoneAstlam$} &  $K_h\nabla_h \CRu$&\multicolumn{2}{c|}{$\lambda_{\rm P_1^\ast}^{\rm R,\ CR}:=\PoneAstlam - F_{\rm P_1^\ast}^{\rm CR}$}&  3\\ \cline{2-5} 
                  &  $\overline{K}_h\nabla \Poneu$ & \multicolumn{2}{c|}{$\lambda_{\rm P_1^\ast}^{\rm R,\ P_1}:=\PoneAstlam - F_{\rm P_1^\ast}^{\rm P_1}$} &  3\\ \cline{2-5} 
                  &   $\overline{K}_h\nabla \PoneAstu$& \multicolumn{2}{c|}{$\lambda_{\rm P_1^\ast}^{\rm R,\ P_1^\ast}:=\PoneAstlam - F_{\rm P_1^\ast}^{\rm P_1^\ast}$} &  - \\ \hline
\end{tabular}
\caption{\footnotesize Definitions of recovered eigenvalues and the corresponding convergence rates.}
\end{table}
Table \ref{tab:definition} lists the definitions of most recovered eigenvalues mentioned in this paper and their theoretical convergence rates. These theoretical convergence rates are direct results from Theorem \ref{Th:cr} and Theorem  \ref{Th:cr2}, 
and the superconvergence results of the recovered gradients $K_h\nabla_h \CRu$ and $\overline{K}_h\nabla \Poneu$.
Although there is no superconvergence result for the recovered gradient $\overline{K}_h\nabla \PoneAstu $, numerical examples still indicate a higher accuracy for the resulted recovered eigenvalues than the original ones.

\subsection{combined eigenvalues}

Another way to achieve high accuracy is to take a weighted-average of discrete eigenvalues with the weights computed by the corresponding asymptotically exact a posteriori error estimates. 
Usually the weighted-average of a lower bound, such as $\CRlam $ and $\ECRlam$, and an upper bound, like $\Ponelam$ and $\PoneAstlam$ can achieve better accuracy than those of two lower bounds or two upper bounds. 

\begin{table}[!ht]
\centering
\begin{tabular}{|c|c||c|c|c|c|c|}
\hline
                                        &  & \multicolumn{2}{c|}{$\Ponelam$} & \multicolumn{3}{c|}{$\PoneAstlam$} \\ \cline{3-7} 
                                        &  &   $F_{\rm P_1}^{\rm CR}$   & $F_{\rm P_1}^{\rm P_1}$ &    $F_{\rm P_1^\ast}^{\rm CR}$   & $F_{\rm P_1^\ast}^{\rm P_1}$  &     $F_{\rm P_1^\ast}^{\rm P_1^\ast}$  \\ \hline\hline
\multicolumn{1}{|c|}{\multirow{4}{*}{$\CRlam$}} & $F_{\rm CR, 1}^{\rm CR}$ &      $\lambda^{\rm C,\ P_1}_{\rm CR, CR, 1}$     & $ \lambda^{\rm C,\ P_1}_{\rm CR, P_1,1}$  &$\lambda^{\rm C,\ P_1^\ast}_{\rm CR, CR, 1}$ &   $\lambda^{\rm C,\ P_1^\ast}_{\rm CR, P_1, 1}$    &   $\lambda^{\rm C,\ P_1^\ast}_{\rm CR, P_1^\ast, 1}$    \\ \cline{2-7} 
\multicolumn{1}{|c|}{}                  & $F_{\rm CR, 2}^{\rm CR}$ &        $\lambda^{\rm C,\ P_1}_{\rm CR, CR, 2}$     & $ \lambda^{\rm C,\ P_1}_{\rm CR, P_1, 2}$  &$\lambda^{\rm C,\ P_1^\ast}_{\rm CR, CR, 2}$ &   $\lambda^{\rm C,\ P_1^\ast}_{\rm CR, P_1, 2}$    &   $\lambda^{\rm C,\ P_1^\ast}_{\rm CR, P_1^\ast, 2}$    \\ \cline{2-7} 
\multicolumn{1}{|c|}{}                  &  $F_{\rm CR, 1}^{\rm P_1}$&   $\lambda^{\rm C,\ P_1}_{\rm P_1, CR, 1}$     & $ \lambda^{\rm C,\ P_1}_{\rm P_1, P_1,1}$  &$\lambda^{\rm C,\ P_1^\ast}_{\rm P_1, CR, 1}$ &   $\lambda^{\rm C,\ P_1^\ast}_{\rm P_1, P_1, 1}$    &   $\lambda^{\rm C,\ P_1^\ast}_{\rm P_1, P_1^\ast, 1}$     \\\cline{2-7} 
\multicolumn{1}{|c|}{}                  &  $F_{\rm CR, 2}^{\rm P_1}$&     $\lambda^{\rm C,\ P_1}_{\rm P_1, CR, 2}$     & $ \lambda^{\rm C,\ P_1}_{\rm P_1, P_1, 2}$  &$\lambda^{\rm C,\ P_1^\ast}_{\rm P_1, CR, 2}$ &   $\lambda^{\rm C,\ P_1^\ast}_{\rm P_1, P_1, 2}$    &   $\lambda^{\rm C,\ P_1^\ast}_{\rm P_1, P_1^\ast, 2}$      \\ \hline
\end{tabular}
\caption{\footnotesize Notations for varied combined eigenvalues.}
\label{tab:combined}
\end{table}

In this paper, lower bounds for eigenvalues are fixed as $\CRlam$, and upper bounds are $\Ponelam$ or $\PoneAstlam$ as shown in Table \ref{tab:combined}. As listed in the first two columns of the table, four different asymptotically exact a posteriori error estimates $F_{\rm CR, 1}^{\rm CR}$, $F_{\rm CR, 2}^{\rm CR}$, $F_{\rm CR, 1}^{\rm P_1}$ and $F_{\rm CR, 2}^{\rm P_1}$ of the lower bound $\CRlam$ are considered here to compute the weights. The first two rows list the two a posteriori error estimates $F_{\rm P_1}^{\rm CR}$ and $F_{\rm P_1}^{\rm P_1}$ for the upper bound  $\Ponelam$, and the three a posteriori error estimates $F_{\rm P_1^\ast}^{\rm CR}$, $F_{\rm P_1^\ast}^{\rm P_1}$ and $F_{\rm P_1^\ast}^{\rm P_1^\ast}$ for the upper bound $\PoneAstlam$. The notations for the resulted combined eigenvalues are listed in the right bottom block of Table \ref{tab:combined}. Take the notation $\lambda^{\rm C,\ P_1^{\ast}}_{\rm P_1,CR, 1}$ for example: define
\begin{equation}\label{combineEig}
\lambda^{\rm C,\ P_1^{\ast}}_{\rm P_1,CR, 1}:=\frac{F_{\rm CR, 1}^{\rm P_1}}{F_{\rm CR, 1}^{\rm P_1} + F_{\rm P_1^{\ast}}^{\rm CR}}\PoneAstlam + \frac{F_{\rm P_1^{\ast}}^{\rm CR}}{F_{\rm CR, 1}^{\rm P_1} + F_{\rm P_1^{\ast}}^{\rm CR}}\CRlam .
\end{equation}
The combined eigenvalues $\lambda^{\rm C,\ P_1^{\ast}}_{\rm P_1,CR, 1}$ are the weighted-average of the upper bound  $\PoneAstlam$ and the lower bounds $\CRlam$. Note that 
\begin{align}
|\lambda -\PoneAstlam + F_{\rm P_1^\ast}^{\rm CR} |&\leq Ch^3,\label{c2}\\
|\lambda -\CRlam - F_{\rm CR, 1}^{\rm P_1} |&\leq Ch^3.\label{c1}
\end{align}
Multiplying \eqref{c2} by $\frac{F_{\rm CR, 1}^{\rm P_1}}{F_{\rm CR, 1}^{\rm P_1} + F_{\rm P_1^{\ast}}^{\rm CR}}$ and \eqref{c1} by $\frac{F_{\rm P_1^{\ast}}^{\rm CR}}{F_{\rm CR, 1}^{\rm P_1} + F_{\rm P_1^{\ast}}^{\rm CR}}$ and adding the results together,
$$
|\lambda - \lambda^{\rm C,\ P_1^{\ast}}_{\rm P_1,CR, 1}|\leq Ch^3.
$$
This indicates a third order accuracy of the combined eigenvalues $\lambda^{\rm C,\ P_1^{\ast}}_{\rm P_1,CR, 1}$ in \eqref{combineEig}. The other combined eigenvalues are defined in a similar way to the one in \eqref{combineEig}. 




The difference between the combined eigenvalues we propose and those in \cite{Hu2012A} lies in the design of the approximate weights. In  \cite{Hu2012A}, two elements, which produce upper bounds and lower bounds of eigenvalues, respectively, are employed to solve eigenvalue problems on two successive meshes. The weights there are computed by the resulted four approximate eigenvalues, while the weights in this paper are computed by the corresponding a posteriori error estimates.

Most of the combined eigenvalues in Table \ref{tab:combined} require to solve two eigenvalue problems and to compute the corresponding a posteriori error estimates, or even one more eigenvalue problem for the recovered gradient. The only exception is the combined eigenvalues $\lambda^{\rm C,\ P_1^\ast}_{\rm CR, CR, 1}$, $\lambda^{\rm C,\ P_1^\ast}_{\rm CR, CR, 2}$, $\lambda^{\rm C,\ P_1^\ast}_{\rm CR, P_1^\ast, 1}$ and $\lambda^{\rm C,\ P_1^\ast}_{\rm CR, P_1^\ast, 2}$, since the eigenpair $(\PoneAstlam, \PoneAstu)$ is computed by the projection in  \cite{Hu2015Constructing} of the nonconforming eigenfunction $\CRu$.  A third order  convergence rate of the resulted combined eigenvalues $\lambda^{\rm C,\ P_1^\ast}_{\rm CR, CR, 1}$ and $\lambda^{\rm C,\ P_1^\ast}_{\rm CR, CR, 2}$  is guaranteed by the high accuracy recovered gradient $K_h\nabla_h \CRu$.

\section{Numerical examples}\label{sec:numerical}
This section presents six numerical tests for eigenvalues of the Laplacian operator. The first three examples deal with smooth eigenfunctions and the other three deal with singular eigenfunctions.

\subsection{Adaptive algorithm}
There exist some a posteriori error estimates for eigenvalues by nonconforming elements in literature \cite{duran2003posteriori,larson2000posteriori,li2009posteriori,verfurth1994posteriori}.
The a posteriori error estimates in Section \ref{sec:posteriori} can also be employed to improve the accuracy of eigenvalue problems by adaptive algorithms. Starting from an initial  grid $\cT_1$, the adaptive mesh refinement process operates in the following (widely used) way (see \cite{verfurth1996review}):
\begin{enumerate}
\item Set k=0.
\item Compute the eigenvalues from \eqref{discrete} on $\cT_k$ and denote them by $\CRlam^{\rm A}$. 
\item Compute the the second type of asymptotically exact a posteriori error estimates for eigenvalues. Take the CR element as example, compute
{\small
	\begin{equation}\label{eta2}
        \begin{array}{rl}
        F_{\rm CR, 2}^{\rm CR}=&\parallel G_h\nabla_h \CRu  -\nabla_h \CRu\parallel_{0,\Om}^2
+2\sum\limits_{e\in \cE_h^i} \int_{e}\big \{P_{\rm P_1}^K(\nabla_h G_h\nabla_h \CRu )\big \}\big [\frac{\partial \CRu}{\partial n}\big ]\ds\\
&-2\CRlam \sum\limits_{K\in\cT_h} \int_K P_{\rm P_1}^K(\nabla_h G_h\nabla_h \CRu ) \CRu\dx,
        \end{array}
        \end{equation}}
        with the postprocessing operator $G_h$ defined in \cite{guo2015gradient}.
\item Compute the local error indicators. In this section, let the local error indicators be the one in \cite{li2009posteriori} as below
	\begin{equation}\label{eta}
        \eta_K^2=\CRlam^2 h_K^2\parallel \CRu \parallel_{0, K}^2
        +\sum_{e\subset \partial K}h_e \|\big[\frac{\partial \CRu}{\partial n}\big ]\|_{0, e}^2+h_e \|\big[\frac{\partial \CRu}{\partial t}\big ]\|_{0,e}^2
        \end{equation}
         and $\eta^2=\sum_{K\in\cT} \eta_K^2$.
\item If $\eta$ is sufficiently small then stop. Otherwise, 
refine those elements $K\in\cT_k$ with $\eta_K>\theta \max_{K\in \cT_k} \eta_K$. 
\item Set $k=k+1$ and go to (2).
\end{enumerate}
Here $0 < \theta < 1$ is a fixed threshold. In the numerical experiments, we always
set $\theta = 0.3$. Since the superconvergence result \eqref{CRsuper} for the CR element requires triangulations to be uniform, we use the PPR postprocessing operator $G_h$ in \cite{guo2015gradient} for all adaptive triangulations in this section.

\subsection{Example 1}
In this example, the model problem \eqref{variance} on the unit square $\Om=(0,1)^2$ is considered. The exact eigenvalues are
$$\lambda =(m^2+n^2)\pi^2 ,\ m,\ n\ \text{are positive integers},$$
and the corresponding eigenfunctions are $u =2\sin (m\pi x_1)\sin (n\pi x_2)$. The domain is partitioned by uniform triangles. The level one triangulation $\cT_1$ consists of two right triangles, obtained by cutting the unit square with a north-east line. Each triangulation $\cT_i$ is refined into a half-sized triangulation uniformly, to get a higher level triangulation $\cT_{i+1}$.

\vskip 0.5 cm
\noindent \textbf{Recovered eigenvalues.} Figure \ref{fig:squareCRECRP1} plots the errors of the first approximate eigenvalues by the CR element, the ECR element, the conforming linear element and their corresponding recovered eigenvalues.
\begin{figure}[!ht]
\setlength{\abovecaptionskip}{0pt}
\setlength{\belowcaptionskip}{0pt}
\centering
\includegraphics[width=8cm,height=6cm]{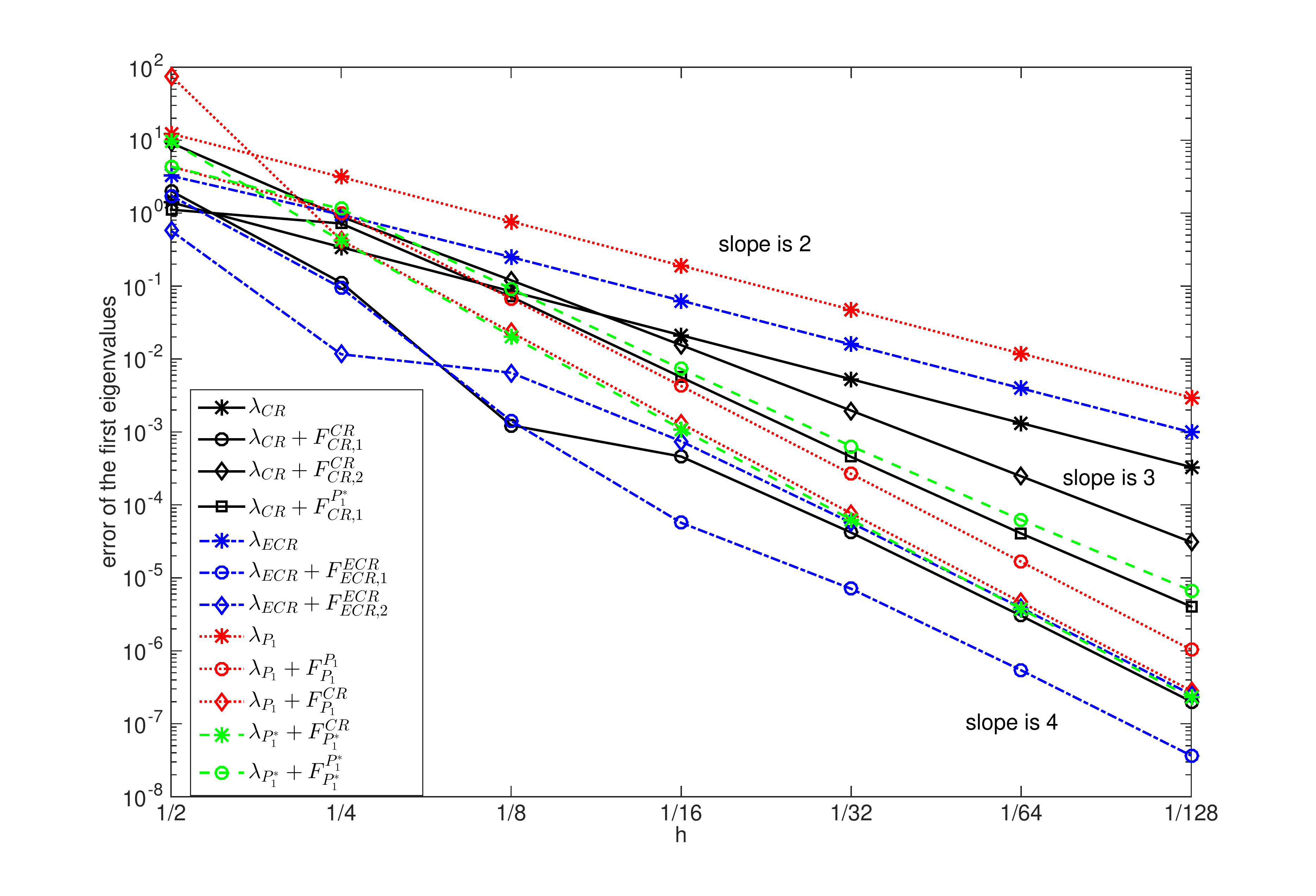}
\caption{\footnotesize{The errors of the recovered eigenvalues for Example 1.}}
\label{fig:squareCRECRP1}
\end{figure}
It shows that the approximate eigenvalues $\CRlam $, $\ECRlam $ and $\Ponelam$ converge at a rate 2, the recovered eigenvalues $\lambda_{\rm CR,\ 2}^{\rm R,\ CR}$ converge at a rate 3, and the recovered eigenvalues $\lambda_{\rm CR,\ 1}^{\rm R,\ CR}$, $\lambda_{\rm ECR,\ 1}^{\rm R,\ ECR}$, $\lambda_{\rm ECR,\ 2}^{\rm R,\ ECR}$,  and $\lambda_{\rm P_1}^{\rm R,\ P_1}$ converge at a higher rate 4. Note that although the theoretical convergence rate of the recovered eigenvalues $\lambda_{\rm CR,\ 1}^{\rm R,\ CR}$, $\lambda_{\rm ECR,\ 1}^{\rm R,\ ECR}$ and $\lambda_{\rm ECR,\ 2}^{\rm R,\ ECR}$ is only 3, numerical tests indicate an even higher convergence rate 4. The errors of the recovered eigenvalues $\lambda_{\rm CR,\ 1}^{\rm R,\ CR}$, $\lambda_{\rm CR,\ 2}^{\rm R,\ CR}$, $\lambda_{\rm ECR,\ 1}^{\rm R,\ ECR}$, $\lambda_{\rm ECR,\ 2}^{\rm R,\ ECR}$ and $\lambda_{\rm P_1}^{\rm R,\ P_1}$ on $\cT_8$  are $2.02\times 10^{-7}$, $3.09\times 10^{-5}$, $3.66\times 10^{-8}$, $2.52\times 10^{-7}$, $1.04\times 10^{-6}$, respectively. They are significant improvements on the errors of the approximate eigenvalues $\CRlam $, $\ECRlam $ and $\Ponelam$, which are $3.30\times 10^{-4}$,	 $9.91\times 10^{-4}$ and $2.97\times 10^{-3}$, respectively. This reveals that the recovered eigenvalues are quite remarkable improvements on the finite element solutions. It shows that the most accurate approximation is $\lambda_{\rm ECR,\ 1}^{\rm R,\ ECR}$, followed by  $\lambda_{\rm CR,\ 1}^{\rm R,\ CR}$, $\lambda_{\rm P_1^{\ast}}^{\rm R,\ CR}$, $\lambda_{\rm P_1}^{\rm R,\ CR}$ and  $\lambda_{\rm P_1}^{\rm R,\ P_1}$.  
Compared to the approximation $\lambda_{\rm P_1}^{\rm R,\ CR}$, the recovered eigenvalue  $\lambda_{\rm P_1^{\ast}}^{\rm R,\ CR}$ achieves a pretty much the same accuracy, but require to solve only one discrete eigenvalue problem.

\begin{table}[!ht]
  \centering
    \begin{tabular}{c|ccccc}
    \hline
    &$\lambda^{\rm C,\ P_1}_{\rm CR, CR,\ 1}$ & $\lambda^{\rm C,\ P_1}_{\rm CR, P_1,\ 1}$ & $\lambda^{\rm C,\ P_1^{\ast}}_{\rm CR, CR,\ 1}$ &$\lambda^{\rm C,\ P_1^{\ast}}_{\rm CR, P_1,\ 1}$ &$\lambda^{\rm C,\ P_1^{\ast}}_{\rm CR, P_1^{\ast},\ 1}$\\\hline
    error & 1.53E-07 & 7.79E-08 & 1.59E-07 &1.35E-08& 4.86E-07\\\hline
    &$\lambda^{\rm C,\ P_1}_{\rm P_1, CR,\ 1}$ & $\lambda^{\rm C,\ P_1}_{\rm P_1, P_1,\ 1}$ & $\lambda^{\rm C,\ P_1^{\ast}}_{\rm P_1, CR,\ 1}$ &$\lambda^{\rm C,\ P_1^{\ast}}_{\rm P_1, P_1,\ 1}$ &$\lambda^{\rm C,\ P_1^{\ast}}_{\rm P_1, P_1^{\ast},\ 1}$\\\hline
    error & 1.33E-06 & 1.25E-06 & 1.34E-06 & 1.19E-06& 6.90E-07\\\hline
    \end{tabular}
    \caption{\footnotesize The errors of different combined eigenvalues on the mesh $\cT_8$ for Example 1.}
    \label{tab:combination}%
\end{table}

\noindent \textbf{Combined eigenvalues.} The errors of some combined eigenvalues on $\cT_8$ are recorded in Table \ref{tab:combination}. Among all the errors in Table \ref{tab:combination}, the smallest one is $1.35\times 10^{-8}$, and it is the error of a weighted-average of $\CRlam $ and $\PoneAstlam$, where the weights are computed by $F_{\text{CR},\ 1}^{\text{CR}}$ and $F_{\rm P_1^\ast}^{\rm P_1}$. For the combined eigenvalue $\lambda^{\rm C,\ P_1^{\ast}}_{\rm CR, CR,\ 1}$ on $\cT_8$, the error $1.59\times 10^{-7}$  is a little larger than the smallest one in Table \ref{tab:combination}, but less computational expense is required.


\noindent \textbf{Extrapolation eigenvalues.} 
Table \ref{tab:compareextrapolation} compares the performance of the recovered eigenvalues and the extrapolation eigenvalues. It shows that the recovered eigenvalue $\lambda_{\rm CR, \ 1}^{\rm R,\ CR} $ behaves better than the extrapolation eigenvalue $\lambda^{\rm EXP}_{\rm P_1}$, but worse than $ \lambda^{\rm EXP}_{\rm CR}$.
\begin{table}[!ht]
\footnotesize
  \centering
    \begin{tabular}{c|cccccc}
    \hline
    h & $\CRlam $ &$ \Ponelam$ & $\lambda_{\rm CR}^{\rm EXP}$ & $\lambda_{\rm P_1}^{\rm EXP}$ & $\lambda_{\rm CR,\ 1}^{\rm R,\ CR}$ & $\lambda_{\rm P_1}^{\rm R,\ P_1}$ \\\hline
$\cT_3$   & -0.3407  & 3.1266  & 1.40E-02  & 8.18E-02  & 1.12E-01  & 1.0103  \\\hline
$\cT_4$   & -8.47E-02 & 7.66E-01 & 6.42E-04 & -2.04E-02 & 1.23E-03 & 6.77E-02 \\\hline
$\cT_5$  & -2.11E-02 & 1.91E-01 & 3.72E-05 & -1.34E-03 & -4.61E-04 & 4.26E-03 \\\hline
$\cT_6$  & -5.29E-03 & 4.76E-02 & 2.28E-06 & -8.24E-05 & -4.18E-05 & 2.66E-04 \\\hline
$\cT_7$  & -1.32E-03 & 1.19E-02 & 1.42E-07 & -5.12E-06 & -3.02E-06 & 1.66E-05 \\\hline
$\cT_8$ & -3.30E-04 & 2.97E-03 & 8.85E-09 & -3.19E-07 & -2.02E-07 & 1.04E-06 \\\hline
    \end{tabular}
  \caption{\footnotesize{The errors of the recovered eigenvalues and the extrapolation eigenvalues, where $\lambda_{\rm CR}^{\rm EXP}=(4\lambda_{\rm CR}^h - \lambda_{\rm CR}^{2h})/3$ and $\lambda_{\rm P_1}^{\rm EXP}=(4\lambda_{\rm P_1}^h - \lambda_{\rm P_1}^{2h})/3$ are approximate eigenvalues by extrapolation methods.}}%
  \label{tab:compareextrapolation}%
\end{table}%

\subsection{Example 2: Neumann Boundary}
Next we consider the following eigenvalue problem \eqref{variance} on the unit square $\Om=(0,1)^2$ with the following boundary conditions 
$$
u |_{x_1=0} =u |_{x_2=0}=u |_{x_2=1}=\partial_{x_1} u |_{x_1=1}=0. 
$$
In this case, there exists an eigenpair $(\lambda , u )$ where
$
\lambda =\frac{5\pi^2}{4}, u =2\cos \frac{\pi (x_1-1)}{2}\sin \pi x_2.
$

\begin{figure}[!ht]
\setlength{\abovecaptionskip}{0pt}
\setlength{\belowcaptionskip}{0pt}
\centering
\includegraphics[width=9cm,height=6cm]{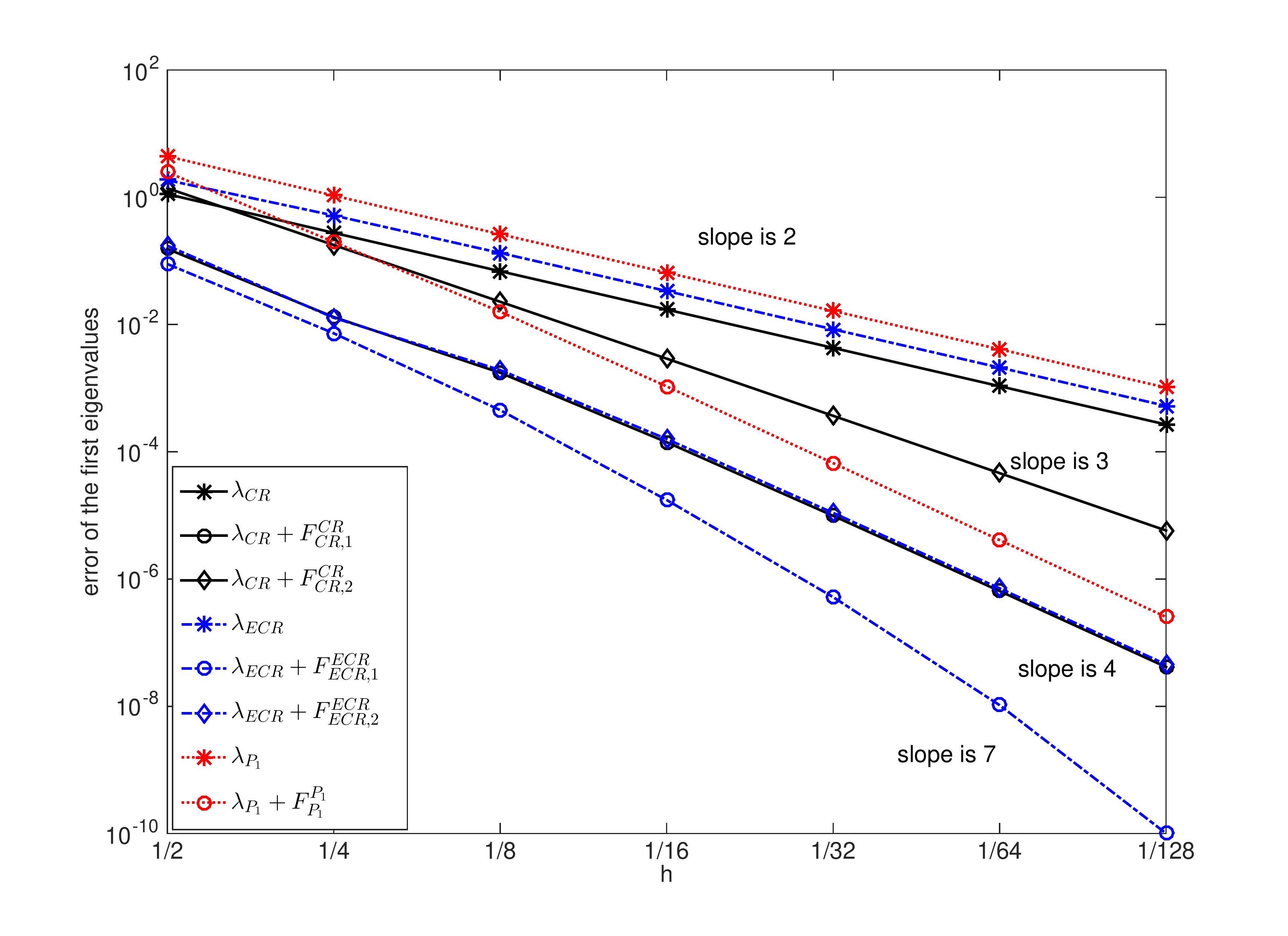}
\caption{\footnotesize{The errors of the recovered eigenvalues for Example 2.}}
\label{fig:squareNeumannCRECRP1}
\end{figure}

We solve this problem on the same sequence of uniform triangulations employed in Example 1. Figure \ref{fig:squareNeumannCRECRP1} shows that the approximate eigenvalues by the CR element, the ECR element and the conforming linear element converge at the same rate 2, the recovered eigenvalue $\lambda_{\rm CR,\ 2}^{\rm R,\ CR}$ converges at a rate 3, the recovered eigenvalues  $\lambda_{\rm CR,\ 1}^{\rm R,\ CR}$, $\lambda_{\rm ECR,\ 2}^{\rm R,\ ECR}$ and $\lambda_{\rm P_1}^{\rm R,\ P_1}$ converge  at a rate 4. Especially, the recovered eigenvalue $\lambda_{\rm ECR,\ 1}^{\rm R,\ ECR}$ converges at a strikingly higher rate 7.

\subsection{Example 3: Triangle Domain}
In this experiment, we consider the eigenvalue problem \eqref{variance} on the domain which is an equilateral triangle:
$$
\Om=\big \{(x_1,x_2)\in \mathbb{R}^2:0< x_2< \sqrt{3}x_1, \sqrt{3}(1-x_1)<x_2\big \}.
$$
The boundary consists of three parts:
$
\Gamma_1=\big\{(x_1,x_2)\in \mathbb{R}^2: x_2=\sqrt{3}x_1,\ 0.5\le x_1\le 1 \big\},
$
$
\Gamma_2=\big\{(x_1,x_2)\in \mathbb{R}^2: x_2=\sqrt{3}(1-x_1),\ 0.5\le x_1\le 1 \big\},
$
$
\Gamma_3=\big\{(x_1,x_2)\in \mathbb{R}^2: x_1=1 ,\ 0\le x_2\le 1 \big\}.
$
Under the boundary conditions $u |_{\Gamma_1\cup \Gamma_2}=0$ and $\partial_{x_1} u |_{\Gamma_3}=0$,
there exists an eigenpair $(\lambda  , u )$, where $\lambda =\frac{16\pi^2}{3}$ and
$$
u =\frac{2\sqrt[4]{12}}{3}\big ( \sin \frac{4\pi x_2}{\sqrt{3}} + \sin 2\pi(x_1-\frac{x_2}{\sqrt{3}}) + \sin 2\pi(1-x_1-\frac{x_2}{\sqrt{3}})\big ).
$$
\begin{table}[!ht]
\footnotesize
  \centering
    \begin{tabular}{c|cccccc}
    \hline
          &$\cT_1$ & $\cT_2$ & $\cT_3$ &$\cT_4$& $\cT_5$   \\\hline
    $\lambda -\CRlam $&  3.6697 & 9.61E-01 & 2.43E-01 & 6.10E-02 & 1.53E-02 \\
    $\lambda -\lambda_{\rm CR,\ 1}^{\rm R,\ CR}$ &-2.10E-01 &-9.46E-03 &-2.80E-04 &-6.08E-06 &-8.80E-09 \\
    $\lambda -\lambda_{\rm CR,\ 2}^{\rm R,\ CR}$ &	6.67E-01 &1.11E-01& 1.46E-02	 &1.81E-03 &2.24E-04 \\\hline
    $\lambda -\ECRlam $&  4.8974 & 1.3186 & 3.36E-01 & 8.44E-02 & 2.11E-02  \\
    $\lambda -\lambda_{\rm ECR,\ 1}^{\rm R,\ ECR}$& 0.3463 & 2.85E-02 & 2.27E-03 & 1.55E-04 & 1.01E-05  \\
    $\lambda -\lambda_{\rm ECR,\ 2}^{\rm R,\ ECR}$&2.12E-03 &1.25E-04 & 3.48E-04 &3.28E-05 &2.40E-06 \\\hline
    $\lambda -\Ponelam$& -11.1936 & -2.879 & -7.29E-01 & -1.83E-01 & -4.58E-02\\
    $\lambda -\lambda_{\rm P_1}^{\rm R,\ P_1}$&1.0209 & 0.1005 & 3.49E-03 & 9.62E-05 & 2.21E-06  \\\hline
    \end{tabular}%
  \caption{\footnotesize The errors and convergence rates of the recovered eigenvalues for Example 3.}
    \label{tab:triangleCRECRP1}%
\end{table}%
Refine the domain $\Om$ into four half-sized triangles twice and denote the resulted triangulation by $\cT_1$. Each triangulation $\cT_i$ is refined into a half-sized triangulation uniformly, to get a higher level triangulation $\cT_{i+1}$. It is showed in Table \ref{tab:triangleCRECRP1} that the recovered eigenvalues converge much faster than discrete eigenvalues and the recovered eigenvalue $\lambda_{\rm CR,\ 1}^{\rm R,\ CR}$ on $\cT_5$ achieves the smallest error $8.80\times 10^{-9}$.

\subsection{Example 4: L-shaped Domain}
Next we consider the eigenvalue problem \eqref{variance} 
on a L-shaped domain $\Om=(-1,1)^2/[0,1]\times[-1,0]$. For this problem, the third and the eighth eigenvalues are known to be $2\pi^2$ and $4\pi^2$, respectively, and the corresponding eigenfunctions are smooth.

In the computation, the level one triangulation is obtained by dividing the domain into three unit squares, each of which is further divided into two triangles. Each triangulation is refined into a half-sized triangulation uniformly to get a higher level triangulation. Since the exact eigenvalues of this problem are unknown, we solve the first eight eigenvalues by the conforming $\rm P_3$ element on the mesh $\cT_9$, and take them as reference eigenvalues.

\begin{center}
\setlength{\tabcolsep}{1.2mm}{
\begin{table}[!ht]
\scriptsize
\renewcommand\arraystretch{1.5}
    \begin{tabular}{c|cccccccc}
    \hline
          &$\lambda_1 $ & $\lambda_2 $ & $\lambda_3 $ & $\lambda_4 $ & $\lambda_5 $ & $\lambda_6 $ & $\lambda_7 $ & $\lambda_8 $  \\\hline
    $\CRlam$ & 9.68E-04 & 9.65E-05 & 6.69E-05 & 1.79E-04 & 8.75E-04 & 7.75E-04 & 4.44E-04 & 3.48E-04 \\\hline
    $\Ponelam$ & -1.10E-03 & -4.52E-04 & -6.02E-04 & -8.83E-04 & -1.37E-03 & -1.37E-03 & -1.21E-03 & -1.23E-03 \\\hline
    $\PoneAstlam$ &  -1.41E-03 & -4.53E-04 & -6.03E-04 & -8.85E-04 & -1.60E-03 & -1.51E-03 & -1.21E-03 & -1.32E-03  \\\hline
    $\lambda_{\rm CR,\ 1}^{\rm R,\ CR}$ & 4.22E-04 & 8.29E-07 & -1.60E-07 & -2.18E-07 & 3.11E-04 & 1.80E-04 & 1.11E-06 & -8.53E-07 \\\hline
    $\lambda_{\rm CR,\ 2}^{\rm R,\ CR}$ & 2.41E-05 & 1.04E-05 & 8.32E-06 & 9.85E-06 & 2.48E-05 & 1.68E-05 & 1.70E-05 & 1.30E-05\\\hline
    $\lambda_{\rm P_1}^{\rm R,\ P_1}$ & 4.78E-04 & 7.24E-06 & 8.03E-07 & 3.35E-06 & 3.54E-04 & 2.09E-04 & 1.72E-05 & 4.11E-06 \\\hline
    $\lambda_{\rm P_1^{\ast}}^{\rm R,\ P_1^\ast}$ &  7.53E-04 & 7.93E-06 & 2.46E-06 & 5.36E-06 & 5.62E-04 & 3.28E-04 & 1.84E-05 & 8.34E-06\\ \hline
    $\lambda^{\rm C,\ P_1}_{\rm CR, P_1, 2}$ & 1.94E-04 & 9.88E-06 & 7.65E-06 & 8.80E-06 & 1.34E-04 & 7.89E-05 & 1.70E-05 & 1.11E-05\\ \hline
    $\lambda^{\rm C,\ P_1^{\ast}}_{\rm CR, P_1^{\ast}, 2}$ &2.46E-04 & 9.99E-06 & 7.80E-06 & 9.13E-06 & 1.76E-04 & 1.08E-04 & 1.73E-05 & 1.21E-05  \\ \hline
    \end{tabular}%
  \caption{\footnotesize The relative errors of different approximations to the first eight eigenvalue on $\cT_8$ for Example 4.}
  \label{tab:Lshape}%
\end{table}}
\end{center}

Table \ref{tab:Lshape} compares the relative errors of the first eight approximate eigenvalues on $\cT_8$ by different methods. It implies that the error of the recovered eigenvalue $\lambda_{\rm CR,\ 1}^{\rm R,\ CR}$ is slightly smaller than that of $\lambda_{\rm P_1}^{\rm R,\ P_1}$. For the first eigenvalue, the eigenfunction is singular, and the corresponding recovered eigenvalue $\lambda_{\rm CR,\ 2}^{\rm R,\ CR}$ achieves higher accuracy than the recovered eigenvalues $\lambda_{\rm CR,\ 1}^{\rm R,\ CR}$ and $\lambda_{\rm P_1}^{\rm R,\ P_1}$. For the third eigenvalue, the corresponding eigenfunction is smooth, and the relative errors of the third recovered eigenvalues $\lambda_{\rm CR,\ 1}^{\rm R,\ CR}$ and $\lambda_{\rm P_1}^{\rm R,\ P_1}$ are $1.60\times 10^{-7}$ and $8.03\times 10^{-7}$, respectively, smaller than that of the recovered eigenvalue $\lambda_{\rm CR,\ 2}^{\rm R,\ CR}$. 

Note that the corresponding eigenfunctions to the third and the eighth eigenvalues are smooth, and the other eigenfunctions are singular. 
Note that the relative error for recovered eigenvalue $\lambda_{\rm CR,\ 2}^{\rm R,\ CR}$ differs little from each other.  However, for $\lambda_{\rm CR,\ 1}^{\rm R,\ CR}$, $\lambda_{\rm P_1}^{\rm R,\ P_1}$ and  $\lambda_{\rm P_1^{\ast}}^{\rm R,\ P_1^\ast}$, the relative errors differ a lot for different eigenvalues. 
This fact implies that the accuracy of the second type error estimates is insensitive to the regularity of the corresponding eigenfunctions, while the accuracy of the first type error estimates relies heavily on the regularity of eigenfunctions. 

\begin{figure}[!ht]
\setlength{\abovecaptionskip}{0pt}
\setlength{\belowcaptionskip}{0pt}
\centering
\includegraphics[width=5cm,height=4.5cm]{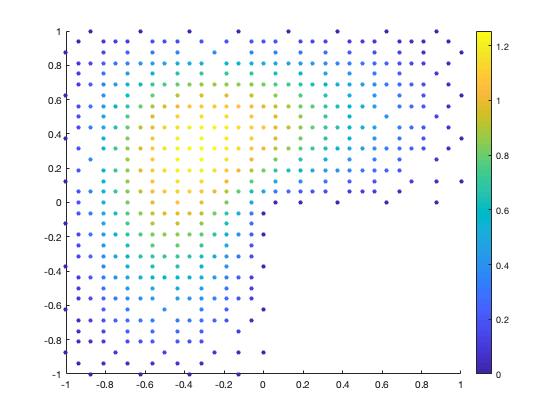}
\includegraphics[width=5cm,height=4.5cm]{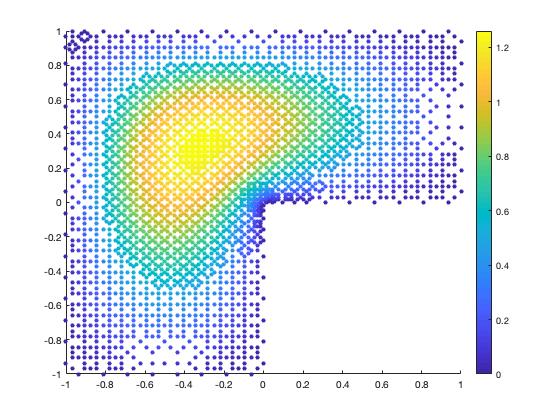}
\includegraphics[width=5cm,height=4.5cm]{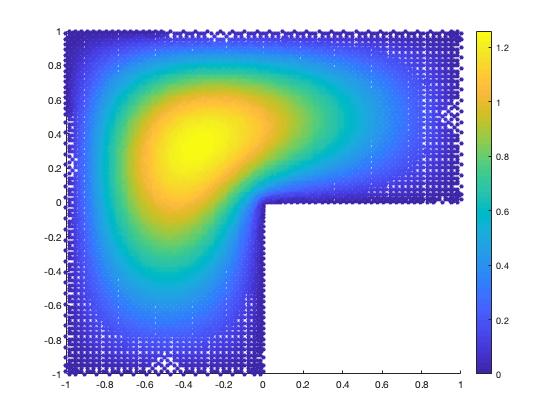}
\includegraphics[width=5cm,height=4.5cm]{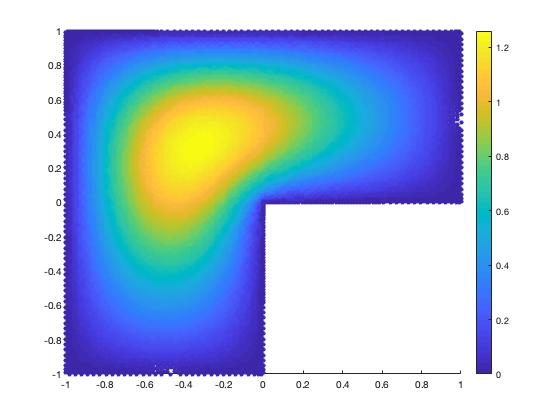}
\caption{\footnotesize{ The first eigenfunctions on adaptive triangulations with $k=5$, $k=10$, $k=15$   and $k=20$  for Example 4.}}
\label{fig:Lmesh}
\end{figure}

\begin{figure}[!ht]
\setlength{\abovecaptionskip}{0pt}
\setlength{\belowcaptionskip}{0pt}
\centering
\includegraphics[width=9cm,height=6cm]{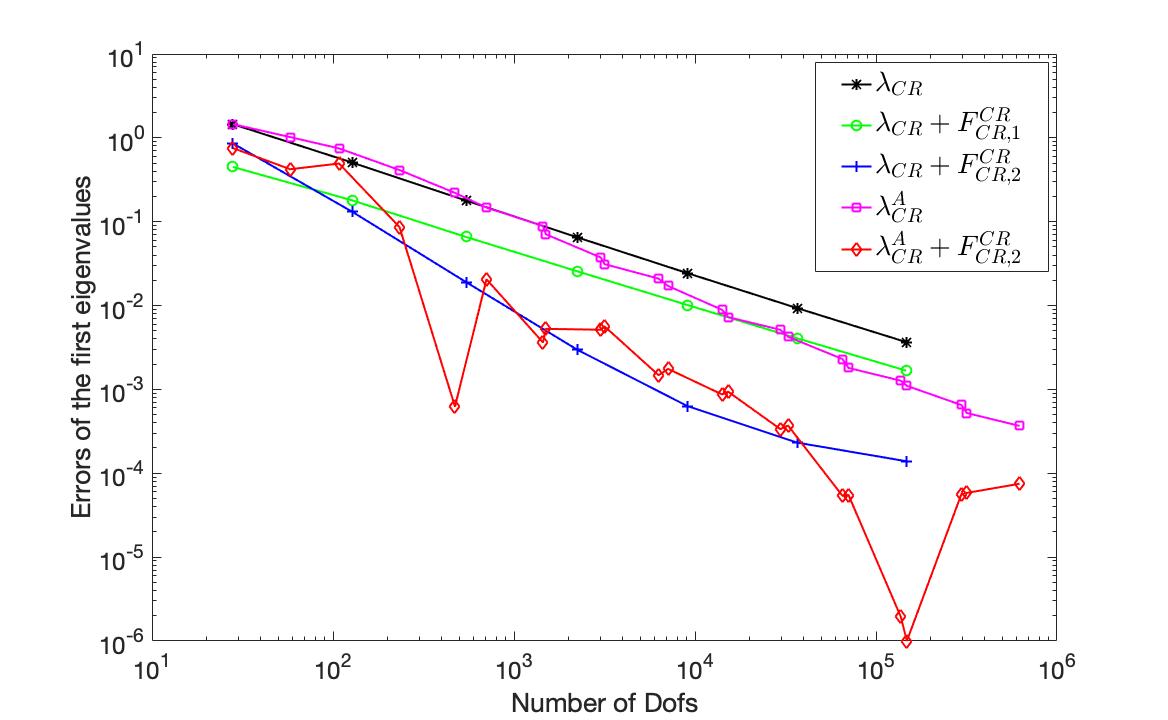}
\caption{\footnotesize{The errors of the first eigenvalues for Example 4.}}
\label{fig:LAFEM}
\end{figure}

Figure \ref{fig:Lmesh} plots the resulted eigenfunctions on triangulations $\cT_{5}$, $\cT_{10}$, $\cT_{15}$ and $\cT_{20}$  from the adaptive algorithm in Section 5.1. 
Figure \ref{fig:LAFEM} plots the relationship between the errors of the first discrete eigenvalues and the sizes of the discrete eigenvalue problems. It shows that the accuracy of the approximate eigenvalues $\CRlam^{\rm A}$ on adaptive triangulations increases faster than that of $\lambda_{\rm CR,\ 1}^{\rm R,\ CR}$  on uniform triangulations. The second type a posteriori error estimates on the uniform triangulations are more accurate than those on adaptive triangulations. 


\begin{figure}[!ht]
\begin{center}
\begin{tikzpicture}[xscale=1,yscale=1]
\draw[-] (-2,-2) -- (-1,-2);
\draw[-] (1,-2) -- (2,-2);
\draw[-] (-2,2) -- (-1,2);
\draw[-] (1,2) -- (2,2);
\draw[-] (1,-2) -- (1,2);
\draw[-] (2,-2) -- (2,2);
\draw[-] (-1,-2) -- (-1,2);
\draw[-] (-2,-2) -- (-2,2);
\draw[-] (-2,0.5) -- (2,0.5);
\draw[-] (-2,-0.5) -- (2,-0.5);

\node[below] at (-2,-2) {\footnotesize (-2,-2)};
\node[above] at (-2,2) {\footnotesize (-2,2)};
\node[below] at (2,-2) {\footnotesize (2,-2)};
\node[above] at (2,2) {\footnotesize (2,2)};

\draw[-] (-2, 0) -- (2,0);
\draw[-] (-1,-0.5) -- (-1,0.5);
\draw[-] (-0.5,-0.5) -- (-0.5,0.5);
\draw[-] (0,-0.5) -- (0,0.5);
\draw[-] (0.5,-0.5) -- (0.5,0.5);
\draw[-] (1,-0.5) -- (1,0.5);

\draw[-] (-2,-0.5) -- (-1,0.5);
\draw[-] (-2,-1) -- (-0.5,0.5);
\draw[-] (-1,-0.5) -- (0,0.5);
\draw[-] (-0.5,-0.5) -- (0.5,0.5);
\draw[-] (0,-0.5) -- (1,0.5);
\draw[-] (0.5,-0.5) -- (1.5,0.5);
\draw[-] (1,-0.5) -- (2,0.5);
\draw[-] (1,-1) -- (2,0);

\draw[-] (-2,1.5) -- (-1.5,2);
\draw[-] (1,1.5) -- (1.5,2);
\draw[-] (-1,-1.5) -- (-1.5,-2);
\draw[-] (2,-1.5) -- (1.5,-2);

\draw[-] (-2,1.5) -- (-1,1.5);
\draw[-] (-2,1) -- (-1,1);
\draw[-] (-2,-1) -- (-1,-1);
\draw[-] (-2,-1.5) -- (-1,-1.5);
\draw[-] (2,1.5) -- (1,1.5);
\draw[-] (2,1) -- (1,1);
\draw[-] (2,-1) -- (1,-1);
\draw[-] (2,-1.5) -- (1,-1.5);
\draw[-] (-1.5,-2) -- (-1.5,2);
\draw[-] (1.5,-2) -- (1.5,2);

\draw[-] (-2,0) -- (-1,1);
\draw[-] (-2,0.5) -- (-1,1.5);
\draw[-] (-2,1) -- (-1,2);
\draw[-] (-2,-2) -- (-1,-1);
\draw[-] (-2,-1.5) -- (-1,-0.5);

\draw[-] (1,0) -- (2,1);
\draw[-] (1,0.5) -- (2,1.5);
\draw[-] (1,1) -- (2,2);
\draw[-] (1,1) -- (2,2);
\draw[-] (1,-1.5) -- (2,-0.5);
\draw[-] (1,-2) -- (2,-1);
\end{tikzpicture}
\end{center}
\caption{\footnotesize Initial triangulation for Example 5.}
\label{fig:Hshape}
\end{figure}
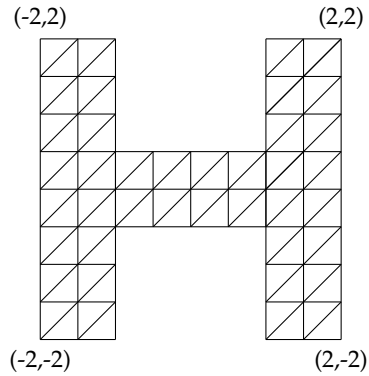

\subsection{Example 5: H-shaped Domain}
Next we consider the  eigenvalue problem \eqref{variance} on a H-shaped domain in Figure \ref{fig:Hshape}.  Note that the first eigenfunction is singular.

The level one triangulation is uniform and shown in Figure \ref{fig:Hshape}. Each triangulation is refined into a half-sized triangulation uniformly to get a higher level triangulation. Since the exact eigenvalues of this problem are unknown, the conforming $\rm P_3$ element is employed to solve the eigenvalue problem on the mesh $\cT_9$, and the resulted approximate eigenvalues are taken as reference eigenvalues.

\begin{table}[!ht]
\footnotesize
  \centering
    \begin{tabular}{c|ccccccc}
\hline
   h     &$\CRlam$&$\Ponelam$&$ \lambda_{\rm CR,\ 1}^{\rm R,\ CR} $&$ \lambda_{\rm CR,\ 2}^{\rm R,\ CR} $&$ \lambda_{\rm P_1}^{\rm R,\ P_1} $&$ \lambda_{\rm P_1^\ast}^{\rm R,\ P_1^\ast} $&$ \lambda_{\rm CR,\ CR, 2}^{\rm C,\ P_1^\ast}$ \\\hline
$ \cT_1  $& 1.61E+00 & -2.84E+00 & 6.27E-01 & 4.39E-01 & 1.93E+00 & 2.01E+00 & 6.46E-01 \\
$ \cT_2  $& 5.74E-01 & -7.99E-01 & 2.05E-01 & 6.03E-02 & 3.37E-01 & 4.70E-01 & 1.21E-01 \\
$ \cT_3  $& 2.06E-01 & -2.54E-01 & 7.52E-02 & 8.46E-03 & 8.76E-02 & 1.44E-01 & 3.07E-02 \\
$ \cT_4  $& 7.54E-02 & -8.68E-02 & 2.91E-02 & 9.72E-04 & 3.04E-02 & 5.26E-02 & 1.00E-02 \\
$ \cT_5  $& 2.83E-02 & -3.10E-02 & 1.15E-02 & 8.95E-05 & 1.17E-02 & 2.05E-02 & 3.74E-03 \\
$ \cT_6  $& 1.09E-02 & -1.15E-02 & 4.56E-03 & 2.06E-05 & 4.61E-03 & 8.10E-03 & 1.48E-03 \\\hline
    \end{tabular}%
  \caption{\footnotesize The errors of the first eigenvalue approximations by different methods for Example 5.}
  \label{tab:Hshape}%
\end{table}%

\begin{figure}[!ht]
\setlength{\abovecaptionskip}{0pt}
\setlength{\belowcaptionskip}{0pt}
\centering
\includegraphics[width=5cm,height=4.5cm]{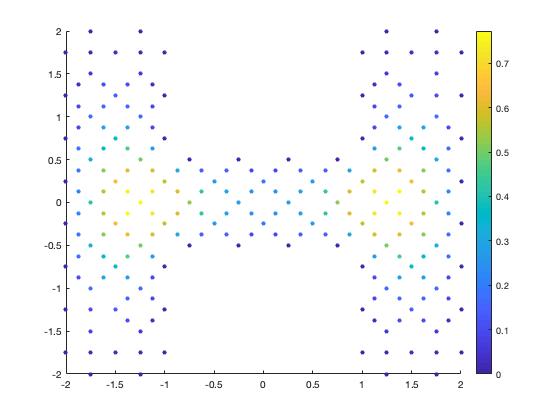}
\includegraphics[width=5cm,height=4.5cm]{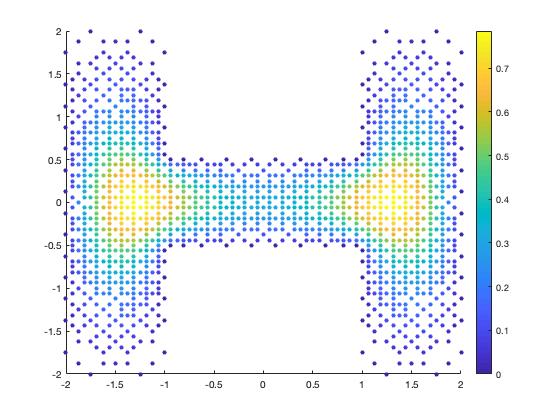}
\includegraphics[width=5cm,height=4.5cm]{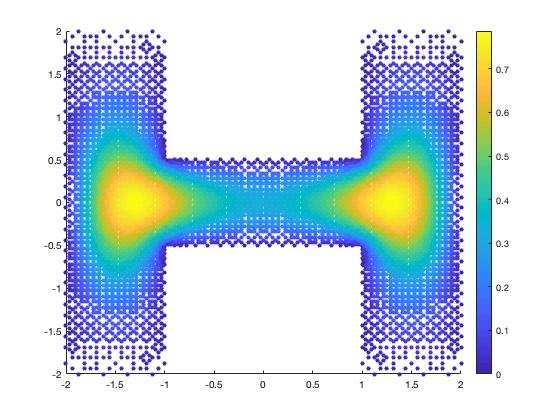}
\includegraphics[width=5cm,height=4.5cm]{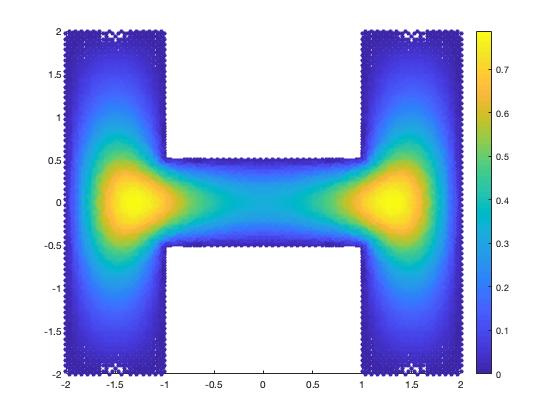}
\caption{\footnotesize{ The first eigenfunctions on adaptive triangulations with $k=2$, $k=5$, $k=10$   and $k=15$  for Example 5.}}
\label{fig:Hmesh}
\end{figure}

\begin{figure}[!ht]
\setlength{\abovecaptionskip}{0pt}
\setlength{\belowcaptionskip}{0pt}
\centering
\includegraphics[width=9cm,height=6cm]{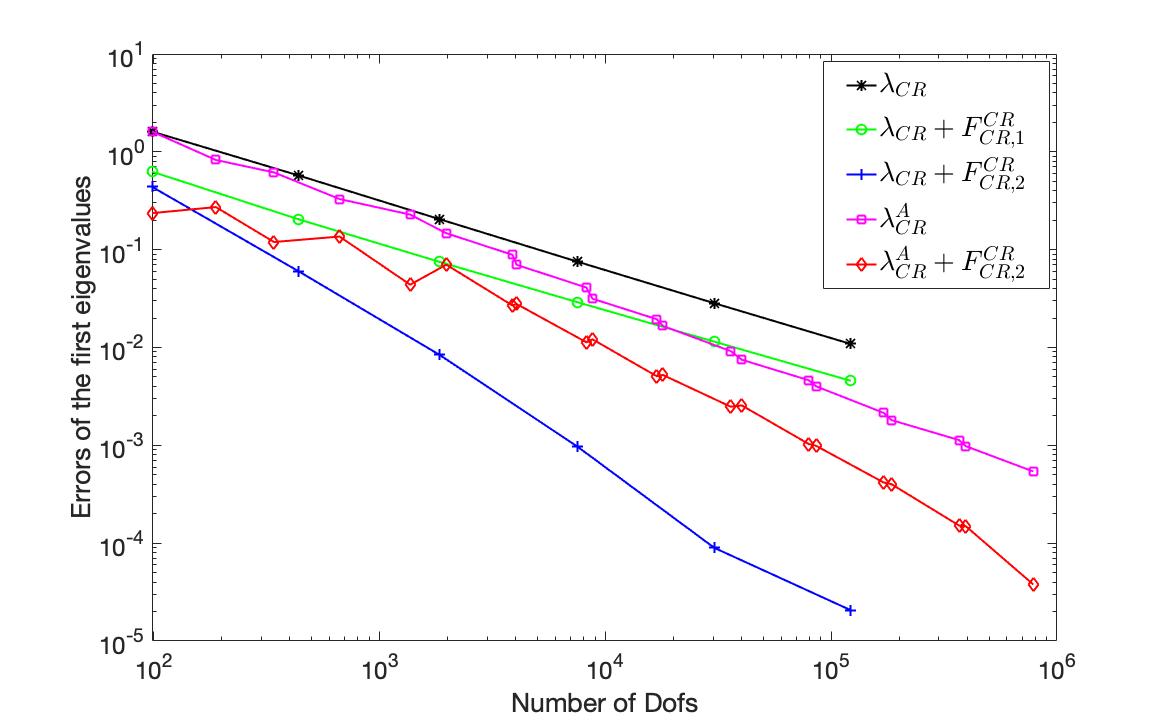}
\caption{\footnotesize{The errors of the first eigenvalue for Example 5.}}
\label{fig:HAFEM}
\end{figure}

As presented in Table \ref{tab:Hshape}, the recovered eigenvalue $ \lambda_{\rm CR,\ 2}^{\rm R,\ CR} $ performs  better than $ \lambda_{\rm CR,\ 1}^{\rm R,\ CR} $ when the eigenfunctions are singular. The errors of $ \lambda_{\rm CR,\ 1}^{\rm R,\ CR} $ and $ \lambda_{\rm CR,\ 2}^{\rm R,\ CR} $ on $\cT_6$ are  $4.56\times 10^{-3}$ and $2.06\times 10^{-5}$, respectively. The recovered eigenvalue $ \lambda_{\rm CR,\ 2}^{\rm R,\ CR} $ on $\cT_6$ achieves the smallest error, remarkably smaller than the combined eigenvalue with error $1.48\times 10^{-3}$. The reason why the combined eigenvalues behave worse is that the asymptotically exact a posteriori error estimates $F_{\rm P_1^\ast}^{\rm CR}$  are less accurate than $F_{\rm CR,\ 2}^{\rm CR}$ when the eigenfunctions are singular. 

Figure \ref{fig:Hmesh} plots the resulted eigenfunctions on triangulations $\cT_{2}$, $\cT_{5}$, $\cT_{10}$ and $\cT_{15}$  from the adaptive algorithm in Section 5.1. 
Figure \ref{fig:HAFEM} plots the relationship between errors of the first eigenvalues and the sizes of discrete eigenvalue problems. The performance of these approximations is quite similar to that in Example 4. 
Compared to the first type recovered eigenvalues  $\lambda_{\rm CR,\ 1}^{\rm R,\ CR}$ on uniform triangulations, approximate eigenvalue $\CRlam^{\rm A}$ on adaptive triangulations achieves higher accuracy when the size of the discrete problem is large enough. 
The recovered eigenvalue $\lambda_{\rm CR,\ 2}^{\rm R,\ CR}$ on the uniform triangulations behaves much better than other approximations.

\subsection{Example 6: Hollow-shaped Domain}
Consider the eigenvalue problem \eqref{variance} with the boundary condition
$$
u|_{\Gamma_D}=0 \text{ and } \partial_{x_1} u |_{\Gamma_N}=0
$$
on a hollow-shaped domain in Figure \ref{fig:Hollowshape} with $\Gamma_N=\{(x_1, x_2): x_1=-1, -1\leq x_2\leq 1\}$ and $\partial \Omega=\Gamma_D \cup \Gamma_N$. Note that the first eigenfunction is singular.
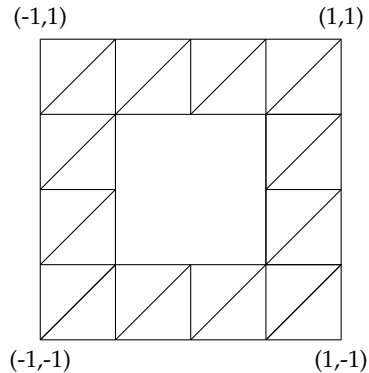
\begin{figure}[!ht]
\begin{center}
\begin{tikzpicture}[xscale=2,yscale=2]
\draw[-] (-1,-1) -- (1,-1);
\draw[-] (1,-1) -- (1,1);
\draw[-] (-1,-1) -- (-1,1);
\draw[-] (-1,1) -- (1,1);
\draw[-] (-0.5,-1) -- (-0.5,1);
\draw[-] (0.5,-1) -- (0.5,1);

\draw[-] (-1,-0.5) -- (1,-0.5);
\draw[-] (-1,0.5) -- (1,0.5);
\node[below] at (-1,-1) {\footnotesize (-1,-1)};
\node[above] at (-1,1) {\footnotesize (-1,1)};
\node[below] at (1,-1) {\footnotesize (1,-1)};
\node[above] at (1,1) {\footnotesize (1,1)};

\draw[-] (-1,0.5) -- (-0.5,1);
\draw[-] (-1,0) -- (0,1);\draw[-] (-1,0) -- (-0.5,0);
\draw[-] (-1,-0.5) -- (-0.5,0);
\draw[-] (-1,-1) -- (-0.5,-0.5);\draw[-] (-1,-1) -- (-0.5,-0.5);
\draw[-] (-0.5,-0.5) -- (-0.5,0.5);

\draw[-] (0.5,0.5) -- (1,1);\draw[-] (0.5,0.5) -- (1,0.5);
\draw[-] (0.5,0) -- (1,0.5);\draw[-] (0.5,0) -- (1,0);
\draw[-] (0,-1) -- (1,0);\draw[-] (0.5,-0.5) -- (1,-0.5);
\draw[-] (0.5,-1) -- (1,-0.5);\draw[-] (0.5,-1) -- (1,-0.5);
\draw[-] (0.5,-0.5) -- (0.5,0.5);

\draw[-] (0,0.5) -- (0,1);\draw[-] (0,-0.5) -- (0,-1);
\draw[-] (0,0.5) -- (0.5,1);\draw[-] (0,-0.5) -- (-0.5,-1);
\end{tikzpicture}
\end{center}
\caption{\footnotesize Initial triangulation for Example 6.}
\label{fig:Hollowshape}
\end{figure}

The level one triangulation is uniform and shown in Figure \ref{fig:Hollowshape}. Each triangulation is refined into a half-sized triangulation uniformly to get a higher level triangulation. Since the exact eigenvalues of this problem are unknown, the conforming $\rm P_3$ element is employed to solve the eigenvalue problem on the mesh $\cT_9$, and the resulted approximate eigenvalues are taken as reference eigenvalues.
\begin{table}[!ht]
\footnotesize
  \centering
   \begin{tabular}{c|ccccccc}
\hline
   h     &$\CRlam$&$\Ponelam$&$ \lambda_{\rm CR,\ 1}^{\rm R,\ CR} $&$ \lambda_{\rm CR,\ 2}^{\rm R,\ CR} $&$ \lambda_{\rm P_1}^{\rm R,\ P_1} $&$ \lambda_{\rm P_1^\ast}^{\rm R,\ P_1^\ast} $&$ \lambda_{\rm CR,\ P_1, 2}^{\rm C,\ P_1} $ \\\hline
$ \cT_2  $& 6.42E-01 & -1.32E+00 & 1.28E-01 & 1.14E-01 & 7.16E-01 & 8.76E-01 & 2.38E-01 \\
$ \cT_3  $& 2.23E-01 & -3.84E-01 & 6.93E-02 & 2.08E-02 & 1.24E-01 & 1.80E-01 & 5.02E-02 \\
$ \cT_4  $& 7.99E-02 & -1.19E-01 & 2.81E-02 & 2.74E-03 & 3.44E-02 & 5.64E-02 & 1.33E-02 \\
$ \cT_5  $& 2.94E-02 & -3.90E-02 & 1.12E-02 & 3.77E-04 & 1.21E-02 & 2.08E-02 & 4.63E-03 \\
$ \cT_6  $& 1.11E-02 & -1.34E-02 & 4.53E-03 & 1.14E-04 & 4.68E-03 & 8.12E-03 & 1.85E-03 \\
$ \cT_7  $& 4.32E-03 & -4.72E-03 & 1.87E-03 & 9.75E-05 & 1.90E-03 & 3.26E-03 & 7.99E-04 \\\hline
    \end{tabular}%
  \caption{\footnotesize The errors of the first eigenvalue approximations by different methods for Example 6.}
  \label{tab:Huishape}%
\end{table}%

\begin{figure}[!ht]
\setlength{\abovecaptionskip}{0pt}
\setlength{\belowcaptionskip}{0pt}
\centering
\includegraphics[width=8cm,height=6cm]{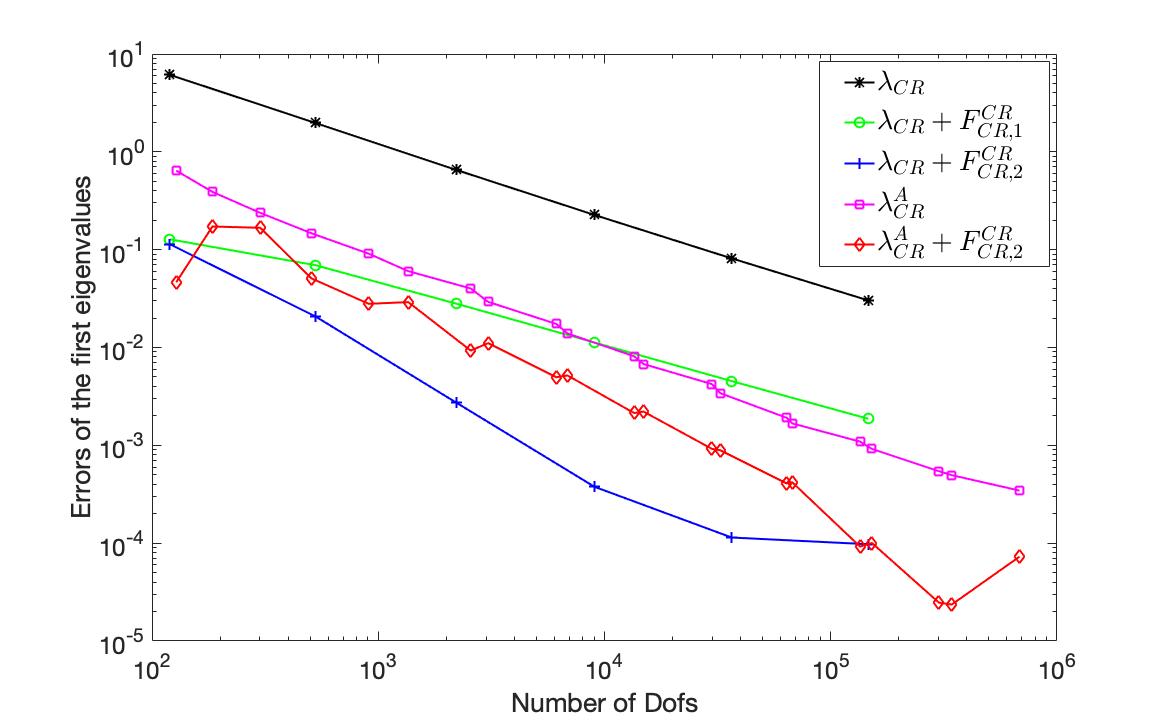}
\caption{\footnotesize{The errors of the first eigenvalues for Example 6.}}
\label{fig:HUIAFEM}
\end{figure}

\begin{figure}[!ht]
\setlength{\abovecaptionskip}{0pt}
\setlength{\belowcaptionskip}{0pt}
\centering
\includegraphics[width=5cm,height=4.5cm]{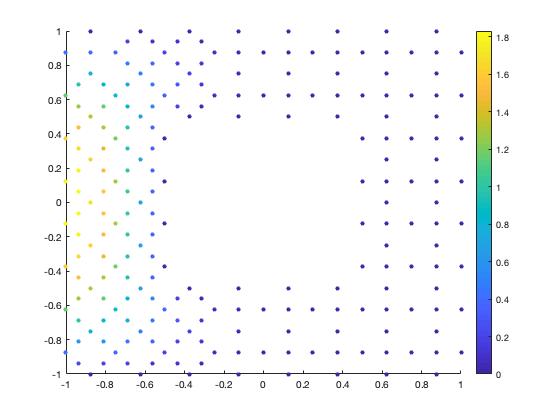}
\includegraphics[width=5cm,height=4.5cm]{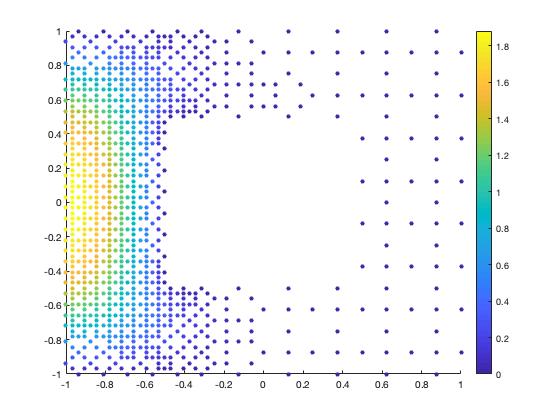}
\includegraphics[width=5cm,height=4.5cm]{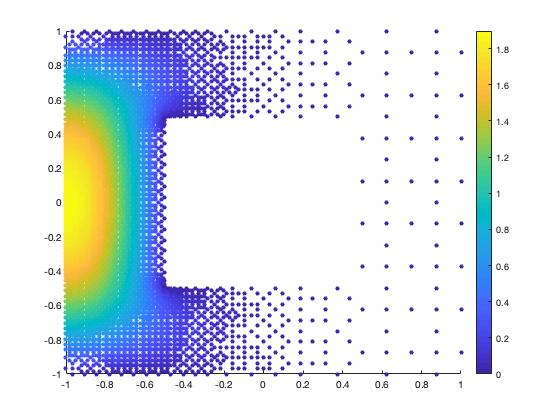}
\includegraphics[width=5cm,height=4.5cm]{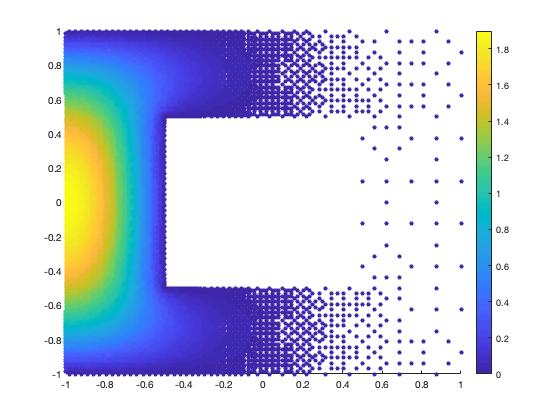}
\caption{\footnotesize{ The first eigenfunctions on adaptive triangulations with $k=2$, $k=5$, $k=10$   and $k=15$  for Example 6.}}
\label{fig:HUImesh}
\end{figure}

Table \ref{tab:Huishape} lists the errors of the approximate eigenvalues on each mesh by different methods. The recovered eigenvalue  $ \lambda_{\rm CR,\ 2}^{\rm R,\ CR} $ performs remarkably better than all the other recovered eigenvalues and combined eigenvalues when the eigenfunctions are singular. 
The error of the recovered eigenvalue $  \lambda_{\rm CR,\ P_1, 2}^{\rm C,\ P_1} $  on $\cT_7$ is $7.99\times 10^{-4}$, it is smaller than all the other approximations but $ \lambda_{\rm CR,\ 2}^{\rm R,\ CR} $. 
It implies that if the corresponding eigenfunctions are  not smooth enough,  the second type of asymptotically exact a posteriori error estimate $F_{\rm CR,\ 2}^{\rm CR}$ admits higher accuracy than $F_{\rm CR,\ 1}^{\rm CR}$.

Figure \ref{fig:HUIAFEM} plots the relationship between errors of the first eigenvalues and the sizes of discrete eigenvalue problems. Figure \ref{fig:HUImesh} plots the resulted eigenfunctions on triangulations $\cT_{2}$, $\cT_{5}$, $\cT_{10}$ and $\cT_{15}$  from the adaptive algorithm in Section 5.1.  
The performance of these approximations is quite similar to those in Example 4 and Example 5. The second type of recovered eigenvalues on uniform triangulations achieves remarkable higher accuracy  than most of the other approximations.

\bibliographystyle{plain}
\bibliography{bibifile}

\begin{thebibliography}{10}

\bibitem{ainsworth2011posteriori}
Mark Ainsworth and J~Tinsley Oden.
\newblock {\em A posteriori error estimation in finite element analysis},
  volume~37.
\newblock John Wiley \& Sons, 2011.

\bibitem{babuvska1978posteriori}
Ivo Babu{\v{s}}ka and Werner~C Rheinboldt.
\newblock A-posteriori error estimates for the finite element method.
\newblock {\em International Journal for Numerical Methods in Engineering},
  12(10):1597--1615, 1978.

\bibitem{babuvska1994validation}
Ivo Babu{\v{s}}ka, T~Strouboulis, CS~Upadhyay, SK~Gangaraj, and K~Copps.
\newblock Validation of a posteriori error estimators by numerical approach.
\newblock {\em International journal for numerical methods in engineering},
  37(7):1073--1123, 1994.

\bibitem{becker2010convergent}
Roland Becker, Shipeng Mao, and Zhongci Shi.
\newblock A convergent nonconforming adaptive finite element method with
  quasi-optimal complexity.
\newblock {\em SIAM Journal on Numerical Analysis}, 47(6):4639--4659, 2010.

\bibitem{brandts1994superconvergence}
Jan~H Brandts.
\newblock Superconvergence and a posteriori error estimation for triangular
  mixed finite elements.
\newblock {\em Numerische Mathematik}, 68(3):311--324, 1994.

\bibitem{carstensen2004all}
Carsten Carstensen.
\newblock All first-order averaging techniques for a posteriori finite element
  error control on unstructured grids are efficient and reliable.
\newblock {\em Mathematics of Computation}, 73(247):1153--1165, 2004.

\bibitem{carstensen2007unifying}
Carsten Carstensen and Jun Hu.
\newblock A unifying theory of a posteriori error control for nonconforming
  finite element methods.
\newblock {\em Numerische Mathematik}, 107(3):473--502, 2007.

\bibitem{carstensen2007framework}
Carsten Carstensen, Jun Hu, and Antonio Orlando.
\newblock Framework for the a posteriori error analysis of nonconforming finite
  elements.
\newblock {\em SIAM Journal on Numerical Analysis}, 45(1):68--82, 2007.

\bibitem{Crouzeix1973Conforming}
Michel Crouzeix and Pierre-Arnaud Raviart.
\newblock Conforming and nonconforming finite element methods for solving the
  stationary \protect{Stokes} equations.
\newblock {\em Revue fran{\c{c}}aise d'automatique informatique recherche
  op{\'e}rationnelle. Math{\'e}matique}, 7(R3):33--75, 1973.

\bibitem{duran2003posteriori}
Ricardo~G Dur{\'a}n, Claudio Padra, and Rodolfo Rodr{\'\i}guez.
\newblock A posteriori error estimates for the finite element approximation of
  eigenvalue problems.
\newblock {\em Mathematical Models and Methods in Applied Sciences},
  13(08):1219--1229, 2003.

\bibitem{guo2015gradient}
Hailong Guo and Zhimin Zhang.
\newblock Gradient recovery for the \protect{Crouzeix--Raviart} element.
\newblock {\em Journal of Scientific Computing}, 64(2):456--476, 2015.

\bibitem{Hu2014Lower}
Jun Hu, Yunqing Huang, and Qun Lin.
\newblock Lower bounds for eigenvalues of elliptic operators: By nonconforming
  finite element methods.
\newblock {\em Journal of Scientific Computing}, 61(1):196--221, 2014.

\bibitem{Hu2015Constructing}
Jun Hu, Yunqing Huang, and Quan Shen.
\newblock Constructing both lower and upper bounds for the eigenvalues of
  elliptic operators by nonconforming finite element methods.
\newblock {\em Numerische Mathematik}, 131(2):273--302, 2015.

\bibitem{Hu2012A}
Jun Hu, Yunqing Huang, and Qun Shen.
\newblock A high accuracy post-processing algorithm for the eigenvalues of
  elliptic operators.
\newblock {\em Journal of Scientific Computing}, 52(2):426--445, 2012.

\bibitem{hu2019asymptotic}
Jun Hu and Limin Ma.
\newblock Asymptotic expansions of eigenvalues by both the
  \protect{Crouzeix--Raviart} and enriched \protect{Crouzeix--Raviart}
  elements.
\newblock {\em arXiv preprint arXiv:1902.09524}, 2019.

\bibitem{hu2018optimal}
Jun Hu, Limin Ma, and Rui Ma.
\newblock Optimal superconvergence analysis for the \protect{Crouzeix-Raviart}
  and the \protect{Morley} elements.
\newblock {\em arXiv preprint arXiv:1808.09810}, 2018.

\bibitem{Hu2016Superconvergence}
Jun Hu and Rui Ma.
\newblock Superconvergence of both the \protect{Crouzeix-Raviart} and
  \protect{Morley} elements.
\newblock {\em Numerische Mathematik}, 132(3):491--509, 2016.

\bibitem{huang2008superconvergence}
Yunqing Huang and Jinchao Xu.
\newblock Superconvergence of quadratic finite elements on mildly structured
  grids.
\newblock {\em Mathematics of computation}, 77(263):1253--1268, 2008.

\bibitem{huang2010superconvergent}
Yunqing Huang and Nianyu Yi.
\newblock The superconvergent cluster recovery method.
\newblock {\em Journal of Scientific Computing}, 44(3):301--322, 2010.

\bibitem{larson2000posteriori}
Mats~G Larson.
\newblock A posteriori and a priori error analysis for finite element
  approximations of self-adjoint elliptic eigenvalue problems.
\newblock {\em SIAM journal on numerical analysis}, 38(2):608--625, 2000.

\bibitem{li2009posteriori}
Youai Li.
\newblock A posteriori error analysis of nonconforming methods for the
  eigenvalue problem.
\newblock {\em Journal of Systems Science and Complexity}, 22(3):495--502,
  2009.

\bibitem{li2018global}
Yu-Wen Li.
\newblock Global superconvergence of the lowest-order mixed finite element on
  mildly structured meshes.
\newblock {\em SIAM Journal on Numerical Analysis}, 56(2):792--815, 2018.

\bibitem{qun1993notes}
Qun Lin and Aihui Zhou.
\newblock Notes on superconvergence and its related topics.
\newblock {\em Journal of Computational Mathematics}, pages 211--214, 1993.

\bibitem{Zhang2006Enhancing}
Ahmed Naga, Zhimin Zhang, and Aihui Zhou.
\newblock Enhancing eigenvalue approximation by gradient recovery.
\newblock {\em Journal of Scientific Computing}, (28):1289--1300, 2006.

\bibitem{rannacher1979nonconforming}
Rolf Rannacher.
\newblock Nonconforming finite element methods for eigenvalue problems in
  linear plate theory.
\newblock {\em Numerische Mathematik}, 33(1):23--42, 1979.

\bibitem{Raviart1977A}
Pierre-Arnaud Raviart and Jean-Marie Thomas.
\newblock A mixed finite element method for second order elliptic problems.
\newblock {\em Springer Berlin Heidelberg}, (606):292--315, 1977.

\bibitem{ShiWangBook}
Zhong-Ci Shi and Ming Wang.
\newblock The finite element method(\protect{In Chinese}).
\newblock {\em Science Press, Beijing}, 2010.

\bibitem{verfurth1994posteriori}
R{\"u}diger Verf{\"u}rth.
\newblock A posteriori error estimates for nonlinear problems. finite element
  discretizations of elliptic equations.
\newblock {\em Mathematics of Computation}, 62(206):445--475, 1994.

\bibitem{verfurth1996review}
R{\"u}diger Verf{\"u}rth.
\newblock A review of a posteriori error estimation and adaptive meshrefinement
  techniques.
\newblock {\em Wiley-Teubner, London, UK}, 1996.

\bibitem{yan2001gradient}
Ningning Yan and Aihui Zhou.
\newblock Gradient recovery type a posteriori error estimates for finite
  element approximations on irregular meshes.
\newblock {\em Computer methods in applied mechanics and engineering},
  190(32-33):4289--4299, 2001.

\bibitem{yang2000posteriori}
Yi-du Yang.
\newblock A posteriori error estimates in adini finite element for eigenvalue
  problems.
\newblock {\em Journal of Computational Mathematics}, pages 413--418, 2000.

\bibitem{Zhang2005A}
Zhimin Zhang.
\newblock A posteriori error estimates based on the polynomial preserving
  recovery.
\newblock {\em Siam Journal on Numerical Analysis}, 42(4):1780--1800, 2005.

\bibitem{zhang2005new}
Zhimin Zhang and Ahmed Naga.
\newblock A new finite element gradient recovery method: superconvergence
  property.
\newblock {\em SIAM Journal on Scientific Computing}, 26(4):1192--1213, 2005.

\bibitem{zienkiewicz1992superconvergent}
Olgierd~Cecil Zienkiewicz and Jianzhong Zhu.
\newblock The superconvergent patch recovery and a posteriori error estimates.
  part 1: The recovery technique.
\newblock {\em International Journal for Numerical Methods in Engineering},
  33(7):1331--1364, 1992.

\bibitem{Zienkiewicz1992The}
Olgierd~Cecil Zienkiewicz and Jianzhong Zhu.
\newblock The superconvergent patch recovery and a posteriori error estimates.
  part ii: Error estimates and adaptivity.
\newblock {\em International Journal for Numerical Methods in Engineering},
  33(7):1331--1364, 1992.

\end{thebibliography}

\end{document}